\newlength{\fixboxwidth}
\renewcommand{\algorithmiccomment}[1]{\bgroup\hfill//~#1\egroup}
\newcommand{\ubar}[1]{\underaccent{\bar}{#1}}
\numberwithin{equation}{section}
\def\V{\Psi}
\def\P{\mathbb{P}}
\def\R{\mathbb{R}}
\def\cN{\mathcal{N}}
\def\E{\mathbb{E}}
\def\I{\mathcal{I}}
\def\J{\mathcal{J}}
\def\L{\mathcal{L}}
\def\W{\mathfrak{X}}
\def\H{\mathcal{H}}
\def\O{\mathcal{O}}
\def\N{\mathcal{N}}
\def\<{\big\langle}
\def\>{\big\rangle}
\def\Img{\operatorname{Im}}
\def\Ker{\operatorname{Ker}}
\def\Cond{\operatorname{Cond}}
\def\diiv{\operatorname{div}}
\def\Var{\operatorname{Var}}
\def\Cov{\operatorname{Cov}}
\def\Card{\operatorname{Card}}
\def\dim{{\operatorname{dim}}}
\def\Span{\operatorname{span}}
\DeclareMathOperator*{\argmin}{arg\,min}
\DeclareMathOperator*{\arginf}{arg\,inf}
\definecolor{red}{rgb}{0.9, 0, 0}
\newtheorem{Theorem}{Theorem}[section]
\newtheorem{Lemma}[Theorem]{Lemma}
\newtheorem{Corollary}[Theorem]{Corollary}
\newtheorem{Remark}[Theorem]{Remark}
\newtheorem{Example}[Theorem]{Example}
\newtheorem{Problem}{Problem}
\begin{document}
\title{De-noising by thresholding operator adapted wavelets}

\date{\today}

\author{Gene Ryan Yoo \thanks{California Institute of Technology, 1200 E California Blvd, MC 253-37, Pasadena, CA 91125, USA, gyoo@caltech.edu} and Houman Owhadi \thanks{California Institute of Technology, 1200 E California Blvd, MC 9-49, Pasadena, CA 91125, USA, owhadi@caltech.edu}}

\maketitle

\begin{abstract}
Donoho and Johnstone \cite{donoho1998minimax} proposed a method from reconstructing an unknown smooth function $u$ from noisy data $u+\zeta$ by translating the empirical wavelet coefficients of $u+\zeta$ towards zero. We consider the situation where the prior information on the unknown function $u$ may not be the regularity of $u$ but that of $ \L u$ where $\L$ is a linear operator (such as a PDE or a graph Laplacian).
We show that the approximation  of $u$ obtained  by thresholding the gamblet (operator adapted wavelet) coefficients of
$u+\zeta$ is near minimax optimal (up to a multiplicative constant), and with high probability, its  energy norm (defined by the operator) is  bounded
by that of $u$ up to a constant depending on the amplitude of the noise. Since gamblets can be computed in $\mathcal{O}(N \operatorname{polylog} N)$ complexity and are localized both in space and eigenspace, the proposed method is of near-linear complexity and generalizable to non-homogeneous noise.
\end{abstract}




\section{Introduction}
\cite{Donoho1990minimax, donoho1995noising, donoho1998minimax} addressed the problem of recovering of a smooth  signal  from noisy observations
  by soft-thresholding empirical wavelet coefficients \cite{donoho1995noising}.  More recently, \cite{Minimaxlinear2017} considered the recovery of $x \in X$ based on the observation of $Tx + \zeta$, where $\zeta_i$ is i.i.d. $\N(0, \sigma^2)$ and $T$ is a compact linear operator between Hilbert spaces $X$ and $Y$ with the prior that $x$ lies in an ellipsoid defined by the eigenvectors of $T^* T$.
    \cite{Minimaxlinear2017} showed that
     thresholding the coefficients of the corrupted signal $Tx + \zeta$ in the basis formed by
     the SVD of $T$ (which can be computed in   $\O(N^3)$ complexity) approached the minimax recovery to a fixed multiplicative constant.

In this paper we are interested in the fast recovery of a signal $u$ based on noisy observations $u+\zeta$ and a bound the regularity of $\L u$ where $\L$ is a linear operator. Our main motivation is to approximate solutions of PDEs or Graph
Laplacians based on their noisy observations.

Our first setting will be that of a symmetric positive linear bijection mapping $\H^s_0(\Omega)$ to $\H^{-s}(\Omega)$, i.e.
   \begin{equation}\label{eqkugdeuydgd}
\L\,:\,\H^s_0(\Omega)\rightarrow \H^{-s}(\Omega)\,,
\end{equation}
where $s\in \mathbb{N}^*$ and $\Omega$ be a regular bounded domain of $\R^d$ ($d\in \mathbb{N}$).
We also  assume $\L$ to be local, i.e.
$\int_{\Omega} u \L v=0$ for $u,v\in \H^s_0(\Omega)$ with disjoint supports (this assumption is used to achieve near-linear complexity in the recovery).  Let $\|\cdot\|$ be the energy-norm defined by
\begin{equation}
\|u\|^2:=\int_{\Omega} u\L u\,,
\end{equation}
and write
\begin{equation}
\<u,v\>:=\int_{\Omega} u\L v\,,
\end{equation}
for the associated scalar product.

Let
\begin{equation}\label{eqjkjhbejhbdd}
\zeta\sim \cN(0,\sigma^2 \delta(x-y))\,,
\end{equation}
be white noise defined as a centered Gaussian process on $\Omega$ with covariance function $\sigma^2 \delta(x-y)$.
Consider the following problem.
\begin{Problem}\label{pb1}
Let $u$ be an unknown element of $\H^s_0(\Omega)$. Given the noisy observation $\eta=u+\zeta$ and a prior bound on $\| \L u\|_{L^2}$, find an approximation of $u$ that is as accurate as possible in the energy norm $\|\cdot\|$.
\end{Problem}
\begin{Example}\label{egprotoa2}
As a running illustrative example we will consider the case where $s=1$ and $\L$ is the differential operator $-\diiv \big(a(x)  \nabla \cdot\big)$ where the conductivity $a$ is a uniformly elliptic symmetric $d\times d$ matrix with entries in $L^\infty(\Omega)$.
 We define $\lambda_{\min}(a)$ as the largest constant and $\lambda_{\max}(a)$ as the smallest constant such that for all $x\in \Omega$ and $l\in \R^d$,
\begin{equation}
\lambda_{\min}(a) |l|^2 \leq l^T a(x) l \leq \lambda_{\max}(a)|l|^2.
\end{equation}
Problem \ref{pb1} then corresponds to the problem of recovering the solution of the PDE
\begin{equation}\label{eqn:scalarprotoa2}
\begin{cases}
    -\diiv \big(a(x)  \nabla u(x)\big)=f(x) &  x \in \Omega; \\
    u=0 & \text{on}\quad \partial \Omega,
\end{cases}
\end{equation}
from its noisy  observation $\eta=u+\zeta$ and a prior bound on $\|f\|_{L^2(\Omega)}$.
\end{Example}

To solve  Problem \ref{pb1} we will decompose $\eta$ over wavelets adapted to the operator $\L$ and filter its wavelet coefficients. In Section \ref{secgambl}, we will first summarize (see \cite{OwhadiMultigrid:2015,OwhadiScovel:2017, OwZh:2016, OwhScobook2018, SchaeferSullivanOwhadi17} for details) the main properties of these operator adapted wavelets, named \emph{gamblets} in reference to their game theoretic interpretation \cite{OwhadiMultigrid:2015, OwhadiScovel:2017,  OwhScobook2018}.
Given that \emph{gamblets}  can also be interpreted in the frameworks of Information Based Complexity \cite{Woniakowski1986},
 Bayesian Numerical Analysis \cite{Diaconis:1988},
 Optimal Recovery \cite{micchelli1977survey}, and Probabilistic Numerics \cite{Hennig2015},
the
 results of this paper suggest that probabilistic numerical methods \cite{Hennig2015, ChkrebtiiCampbell2015, Owhadi:2014,  Briol2015,  cockayne2016probabilistic,  raissi2017inferring, Cockayne2017} may not only lead to efficient quadrature rules \cite{schober2014nips}, seamless integration of model uncertainty with numerical errors \cite{oates2017bayesian}, fast solvers \cite{SchaeferSullivanOwhadi17}, and
 optimal  pipelines of computation \cite{Cockayne2017}, they may also lead to near-optimal methods for the de-noising of solutions of differential equations (see Remark \ref{rmkhghvvgyg}).

\section{Gamblets}\label{secgambl}
\subsection{Hierarchy of measurement functions}
Let $q\in \mathbb{N}^*$ (used to represent a number of scales). Let $(\I^{(k)})_{1\leq k \leq q}$ be a hierarchy of labels and let $\{\phi_i^{(k)}|k\in \{1,\ldots,q\},\, i\in \I^{(k)}\}$ be a nested hierarchy of elements of $\H^{-s}(\Omega)$ such that
\begin{equation} \label{nesting relation}
\phi_i^{(k)}=\sum_{j\in \I^{(k+1)}}\pi_{i,j}^{(k,k+1)}\phi_j^{(k+1)}\,,
\end{equation}
for $i\in \I^{(k)}$, $k\in \{1,\ldots,q-1\}$, where $\pi^{(k,k+1)}$ is an $\I^{(k)}\times \I^{(k+1)}$ matrix. Assume that the $(\phi_i^{(q)})_{i\in \I^{(q)}}$ are linearly independent and (writing $\pi^{(k+1,k)}$ for the transpose of $\pi^{(k,k+1)}$ and
$I^{(k)}$ for the $\I^{(k)}\times \I^{(k)}$ matrix)
\begin{equation}
\pi^{(k,k+1)}\pi^{(k+1,k)}=I^{(k)}\,.
\end{equation}
\subsection{Hierarchy of operator adapted pre-wavelets}
Let $(\psi_i^{(k)})_{i\in \I^{(k)}}$ be the hierarchy of optimal recovery splines associated with $(\phi_i^{(k)})_{i\in \I^{(k)}}$, i.e. for $k\in \{1,\ldots,q\}$ and $i\in \I^{(k)}$,
\begin{equation}
\psi_i^{(k)}=\sum_{j\in \I^{(k)}} A^{(k)}_{i,j} \L^{-1}\phi_j^{(k)}\,,
\end{equation}
where
\begin{equation}
A^{(k)}:=(\Theta^{(k)})^{-1}\,,
\end{equation}
and $\Theta^{(k)}$ is the $\I^{(k)}\times \I^{(k)}$ symmetric positive definite Gramian matrix with entries (writing $[\phi,v]$ for the duality pairing between $\phi\in \H^{-s}(\Omega)$ and $v\in \H^s_0(\Omega)$)
\begin{equation}
\Theta^{(k)}_{i,j}=[\phi_i^{(k)},\L^{-1}\phi_j^{(k)}]\,.
\end{equation}
Note that
\begin{equation}
    A^{(k)}_{i,j} = \<\psi_i^{(k)}, \psi_j^{(k)}\>.
\end{equation}
Writing
$\Phi^{(k)}:=\Span\{\phi_i^{(k)}\mid i\in \I^{(k)}\}$ and $\V^{(k)}:=\Span\{\psi_i^{(k)}\mid i\in \I^{(k)}\}$,
$\Phi^{(k)}\subset \Phi^{(k+1)}$ and $\Psi^{(k)}=\L^{-1} \Phi^{(k)}$ imply  $\Psi^{(k)}\subset \Psi^{(k+1)}$.
The $(\phi_i^{(k)})_{i\in \I^{(k)}}$ and $(\psi_i^{(k)})_{i\in \I^{(k)}}$ form a bi-orthogonal system in the sense that
\begin{equation}
[\phi_i^{(k)},\psi_j^{(k)}]=\delta_{i,j}\text{ for }i,j\in \I^{(k)}\,,
\end{equation}
and the $\<\cdot,\cdot\>$-orthogonal projection of $u\in \H^s_0(\Omega)$ on $\Psi^{(k)}$ is
\begin{equation}\label{eqkjhekjdhdjnd}
u^{(k)}:= \sum_{i \in \I^{(k)}}[\phi_i^{(k)},u]\psi_i^{(k)}\,.
\end{equation}
\subsection{Operator adapted wavelets}
Let $(\J^{(k)})_{2\leq k\leq q}$ be hierarchy of labels such that (writing $|\J^{(k)}|$ for the cardinal of $\J^{(k)}$)
\begin{equation}\label{eqkejhdkdhjd}
|\J^{(k)}|=|\I^{(k)}|-|\I^{(k-1)}|\,.
\end{equation}
For $k\in \{2,\ldots,q\}$, let $W^{(k)}$ be a $\J^{(k)}\times \I^{(k)}$ matrix such that (writing $W^{(k),T}$ for the transpose of $W^{(k)}$)
\begin{equation}
\Ker(\pi^{(k-1,k)})=\Img(W^{(k),T})\,.
\end{equation}
For $k\in \{2,\ldots,q\}$ and $i\in \J^{(k)}$ define
\begin{equation}
\chi_i^{(k)}:=\sum_{j\in \I^{(k)}} W^{(k)}_{i,j} \psi_j^{(k)}\,,
\end{equation}
and write
$
\W^{(k)}:=\Span\{\chi_i^{(k)}\mid i\in \J^{(k)}\}\,.
$
Then $\W^{(k)}$ is the $\<\cdot,\cdot\>$-orthogonal complement of $\V^{(k-1)}$ in $\V^{(k)}$, i.e.
$
\V^{(k)}=\V^{(k-1)}\oplus \W^{(k)}\,,
$
and
\begin{equation}
\V^{(q)}=\V^{(1)}\oplus \W^{(2)}\oplus \cdots \oplus \W^{(q)}\,.
\end{equation}
For $k\in \{2,\ldots,q\}$ write
\begin{equation}
B^{(k)}:= W^{(k)}A^{(k)}W^{(k),T}\text{ and }N^{(k)}:=A^{(k)} W^{(k),T} B^{(k),-1}\,,
\end{equation}
and for $i\in \J^{(k)}$,
\begin{equation}
\phi_i^{(k),\chi}:=\sum_{j\in \I^{(k)}} N^{(k),T}_{i,j}\phi_j^{(k)}\,.
\end{equation}
Then defining
$u^{(k)}$ as in \eqref{eqkjhekjdhdjnd}, it holds true that for $k\in \{2,\ldots,q\}$, $u^{(k)}-u^{(k-1)}$ is the $\<\cdot,\cdot\>$-orthogonal projection of $u$ on $\W^{(k)}$ and
\begin{equation}
u^{(k)}-u^{(k-1)}=\sum_{i\in \J^{(k)}} [\phi_i^{(k),\chi},u]\chi_i^{(k)}\,.
\end{equation}
To simplify notations write $\J^{(1)}:=\I^{(1)}$, $B^{(1)}:=A^{(1)}$, $N^{(1)}:=I^{(1)}$, $\phi_i^{(1),\chi}:=\phi_i^{(1)}$ for  $i\in \J^{(1)}$,
$\J:=\J^{(1)}\cup \cdots \cup \J^{(q)}$, $\chi_i:=\chi_i^{(k)}$ and $\phi_i^\chi:=\phi_i^{(k),\chi}$ for $i\in \J^{(k)}$ (note that the use of $\phi^{(k),\chi}_i$ in an equation will imply $i \in \J^{(k)}$).
Then the $\phi_i^\chi$ and $\chi_i$ form a bi-orthogonal system, i.e.
\begin{equation}
[\phi_i^\chi,\chi_j]=\delta_{i,j}\text{ for } i,j\in \J\,,
\end{equation}
and
\begin{equation}\label{eqkljedhediduhd}
u^{(q)}=\sum_{i\in \J}[\phi_i^\chi,u]\chi_i\,.
\end{equation}
Simplifying notations further, we will write $[\phi^\chi,u]$ for the $\J$ vector with entries $[\phi_i^\chi,u]$ and $\chi$ for the $\J$ vector with entries $\chi_i$ so that \eqref{eqkljedhediduhd} can be written
\begin{equation}\label{eqjkhdgiehjddf}
u^{(q)}=[\phi^\chi,u]\cdot \chi\,.
\end{equation}
Further, define the $\J$ by $\J$ block-diagonal matrix $B$ defined as $B_{i,j} = B^{(k)}_{i,j}$ if $i, j \in \J^{(k)}$ and $B_{i, j} = 0$ otherwise.  Note that it holds that $B_{i, j} = \< \chi_i, \chi_j \>$.
When $q=\infty$, and $\cup_{k=1}^\infty \Phi^{(k)}$ is dense in $\H^{-s}(\Omega)$ then (writing $\W^{(1)}:=\V^{(1)}$)
\begin{equation}
\H^s_0(\Omega)=\oplus_{k=1}^\infty \W^{(k)}\,,
\end{equation}
$u^{(q)}=u$ and \eqref{eqkljedhediduhd} is the corresponding multi-resolution decomposition of $u$. When $q<\infty$, $u^{(q)}$ is the projection of $u$ on $\oplus_{k=1}^q \W^{(k)}$ and \eqref{eqjkhdgiehjddf} is the corresponding multi-resolution decomposition.

\subsection{Pre-Haar Wavelet Measurement Functions} \label{pre-haar}

\begin{Example}\label{exprehaar}
Let  $\I^{(q)}$ be the finite set of $q$-tuples of the form $i=(i_1,\ldots,i_q)$.  For $1\leq k<r$ and an $r$-tuple of the form $i=(i_1,\ldots,i_r)$, write $i^{(k)}:=(i_1,\ldots,i_k)$.
For $1\leq k \leq q$ and $i=(i_1,\ldots,i_q)\in \I^{(q)}$,  write $\I^{(k)}:=\{i^{(k)}\,:\, i\in \I^{(q)}\}$.
Let $\delta, h\in (0,1)$.
   Let  $(\tau_i^{(k)})_{i\in \I^{(k)}}$ be  uniformly Lipschitz convex sets forming a nested partition of $\Omega$, i.e. such that
$
   \Omega=\cup_{i\in \I^{(k)}}\tau_i^{(k)},\quad k \in \{1,\ldots, q\}
$
    is a disjoint union except for the boundaries,  and
$
    \tau_i^{(k)}=\cup_{j\in \I^{(k+1)}: j^{(k)}=i}\tau_j^{(k+1)}, \quad  k \in \{1,\ldots, q-1\}$.
      Assume that each $\tau_i^{(k)}$,  contains a ball  of radius $\delta h^k$,
 and is contained in the  ball of radius $ \delta^{-1}h^k$. Writing $|\tau^{(k)}_i|$ for the volume of $\tau^{(k)}_i$, take
 \begin{equation}
\phi_i^{(k)}:=1_{\tau^{(k)}_i} |\tau^{(k)}_i|^{-\frac{1}{2}}\,.
\end{equation}
The nesting relation \eqref{nesting relation} is then satisfied with
$\pi^{(k,k+1)}_{i,j}:=
|\tau^{(k+1)}_j|^{\frac{1}{2}}|\tau^{(k)}_i|^{-\frac{1}{2}}$ for $j^{(k)}=i$ and $\pi^{(k,k+1)}_{i,j}:=0$ otherwise.

For $k\in \{2,\ldots,q\}$ let $\J^{(k)}$ be a finite set of $k$-tuples of the form $j=(j_1,\ldots,j_k)$
such that $\{j^{(k-1)}\mid j\in \J^{(k)}\}=\I^{(k-1)}$ and for $i\in \I^{(k-1)}$, $\Card\{ j\in \J^{(k)}\mid j^{(k-1)}=i\}=\Card\{ s\in \I^{(k)}\mid s^{(k-1)}=i\}-1$. Note that the cardinalities of these sets satisfy \eqref{eqkejhdkdhjd}.

Write $J^{(k)}$ for the $\J^{(k)}\times \J^{(k)}$ identity matrix.
For $k=2,\ldots,q$ let $W^{(k)}$ be a $\J^{(k)}\times \I^{(k)}$ matrix such that  $\Img(W^{(k),T})=\Ker(\pi^{(k-1,k)})$, $W^{(k)}(W^{(k)})^T=J^{(k)}$ and $W^{(k)}_{i,j}=0$ for $i^{(k-1)}\not=j^{(k-1)}$.
\end{Example}

\begin{Theorem}
\label{themoptimaldecomposition}
Under Example \ref{exprehaar}, it holds true that
\begin{enumerate}
\item For $k\in \{1,\ldots,q\}$ and $u\in \L^{-1}L^2(\Omega)$,
\begin{equation}
\| u - u^{(k)}\|   \leq C h^{ks}\|\L u\|_{L^2(\Omega)} \,.
\end{equation}
\item  Writing $\Cond(M)$ for the condition number of a matrix $M$  we have
 for $k\in \{1, \cdots, q\}$,
\begin{equation}\label{eqcond}
C^{-1}h^{-2(k-1)s}J^{(k)}\leq B^{(k)} \leq Ch^{-2ks}J^{(k)}\text{ and }\Cond(B^{(k)})\leq Ch^{-2s}\,.
\end{equation}
\item For $i\in \I^{(k)}$ and $x_i^{(k)}\in \tau_i^{(k)}$
\begin{equation}\label{eqatycc2}
\|\psi_i\|_{\H^s(\Omega\setminus B(x_i^{(k)},nh))} \leq  C h^{-s}e^{-n/C}
\end{equation}
\item The wavelets $\psi_i^{(k)}, \chi_i^{(k)}$ and stiffness matrices $A^{(k)}, B^{(k)}$ can be computed can be computed to precision $\epsilon$ (in $\|\cdot\|$-energy norm for elements of $\H^s_0(\Omega)$ and in Frobenius norm for matrices)
 in $\operatorname{O}(N \log^{3d} \frac{N}{\epsilon})$ complexity.
\end{enumerate}
Furthermore the constant $C$ depends only on
$\delta, \Omega, d, s$,
\begin{equation}
\label{eqkdjhekdfhfu}
 \|\L\|:=\sup_{u\in \H^s_0(\Omega)} \frac{\|\L u\|_{\H^{-s}(\Omega)}}{ \|u\|_{\H^s_0(\Omega)}}\text{ and }
\|\L^{-1}\|:=\sup_{u\in \H^s_0(\Omega)} \frac{ \|u\|_{\H^s_0(\Omega)}}{\|\L u\|_{\H^{-s}(\Omega)}}\,.
\end{equation}
\end{Theorem}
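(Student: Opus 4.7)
Since $u^{(k)}$ is the $\<\cdot,\cdot\>$-orthogonal projection of $u$ onto $\Psi^{(k)}$, the bi-orthogonality $[\phi_i^{(k)},u^{(k)}]=[\phi_i^{(k)},u]$ implies that $u-u^{(k)}$ has zero mean on every cell $\tau_i^{(k)}$, while $\L u^{(k)}\in\Phi^{(k)}$ is piecewise constant on the level-$k$ partition. Hence
\begin{equation*}
\|u-u^{(k)}\|^2 \;=\; \int_\Omega (u-u^{(k)})\,\L u \;=\; \sum_{i\in\I^{(k)}}\int_{\tau_i^{(k)}}(u-u^{(k)})(\L u-\overline{\L u}_i),
\end{equation*}
where $\overline{\L u}_i$ denotes the mean of $\L u$ on $\tau_i^{(k)}$. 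Cauchy--Schwarz on each cell, a scale-$h^k$ Poincar\'e-type inequality bounding $\|u-u^{(k)}\|_{L^2(\tau_i^{(k)})}$ by $Ch^{ks}\|u-u^{(k)}\|_{H^s(\tau_i^{(k)})}$, and coercivity of $\L$ (equivalence of $\|\cdot\|$ and $\|\cdot\|_{H^s_0}$) then close the estimate.

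\textbf{Part 2 (condition number).} Using $B^{(k)}=W^{(k)}A^{(k)}W^{(k),T}$ and the fact that $W^{(k)}$ has orthonormal rows, I reduce to bounding $x^T A^{(k)}x$ for $x:=W^{(k),T}v\in\Ker(\pi^{(k-1,k)})$, with $|x|=|v|$. For the upper bound I exhibit an explicit competitor $w\in\H^s_0(\Omega)$ built from disjointly-supported scale-$h^k$ bumps with prescribed cell averages $\int_{\tau_l^{(k)}}w=x_l|\tau_l^{(k)}|^{1/2}$; inverse scaling yields $\|w\|^2\leq Ch^{-2ks}|x|^2$, and optimality of the spline $\sum_j x_j\psi_j^{(k)}$ then gives $x^T A^{(k)}x\leq\|w\|^2$. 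For the lower bound, the nesting relation \eqref{nesting relation} together with $\pi^{(k-1,k)}x=0$ forces $[\phi_i^{(k-1)},u]=0$ for $u:=\sum_j x_j\psi_j^{(k)}$, so $u$ has vanishing mean on each level-$(k-1)$ cell. A scale-$h^{k-1}$ Poincar\'e inequality combined with $|x|^2\leq\|u\|_{L^2}^2$ (from bi-orthogonality and disjointness of the $\phi_l^{(k)}$-supports) yields $|x|^2\leq Ch^{2(k-1)s}\|u\|^2$; dividing the two bounds gives the condition number estimate.

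\textbf{Part 3 (exponential decay).} This is the main technical step, and I would follow the Caccioppoli-type iteration developed in \cite{OwhadiMultigrid:2015,OwhScobook2018}. For a smooth cutoff $\eta$ equal to $1$ outside $B(x_i^{(k)},nh)$ and vanishing inside $B(x_i^{(k)},(n-1)h)$, I test the variational identity characterizing $\psi_i^{(k)}$ against $v:=\eta^2\psi_i^{(k)}$, corrected by the $\Psi^{(k)}$-projection needed to maintain the measurement constraints $[\phi_j^{(k)},v]=0$ for $j\neq i$. Locality of $\L$ and commutator estimates of the form $\|\nabla^j\eta\|_\infty\leq Ch^{-j}$ produce a recursion $E_n\leq q\,E_{n-1}$ (with $q<1$) for the annular energies $E_n:=\|\psi_i^{(k)}\|_{H^s(\Omega\setminus B(x_i^{(k)},nh))}^2$. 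Iterating from the global base bound $\|\psi_i^{(k)}\|\leq Ch^{-ks}$ supplied by Part 2 yields the stated $e^{-n/C}$ decay.

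\textbf{Part 4 (complexity).} With Part 3 in hand, truncating each $\psi_i^{(k)}$ and $\chi_i^{(k)}$ outside a ball of radius $O(h^k\log(N/\epsilon))$ incurs only an $\epsilon$ error in the energy norm and reduces each stiffness matrix to $O(\log^d(N/\epsilon))$ nonzeros per row. Plugging these sparsified quantities into the Fast Gamblet Transform of \cite{SchaeferSullivanOwhadi17}, which assembles $A^{(k)}$ and $B^{(k)}$ recursively via sparse local solves whose conditioning is controlled by Part 2, each of the $O(\log N)$ levels costs $O(N\log^{3d}(N/\epsilon))$ work, giving the claimed total complexity.
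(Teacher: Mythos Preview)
The paper does not supply its own proof of this theorem: it is presented in Section~\ref{secgambl} as a summary of gamblet properties established elsewhere, with all four items deferred to \cite{OwhadiMultigrid:2015,OwhadiScovel:2017,OwZh:2016,OwhScobook2018,SchaeferSullivanOwhadi17} and no argument given in-text. Your sketch is therefore being measured against those references rather than against anything in the present paper, and at that level it is faithful: the Galerkin/Poincar\'e duality for Part~1, the competitor-versus-Poincar\'e sandwich for Part~2, the cutoff--Caccioppoli iteration for Part~3, and truncation plus well-conditioned local solves for Part~4 are precisely the mechanisms developed in \cite{OwhadiMultigrid:2015,OwhScobook2018}.

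One genuine caution on Part~1: the inequality you invoke, $\|u-u^{(k)}\|_{L^2(\tau_i^{(k)})}\leq Ch^{ks}\|u-u^{(k)}\|_{H^s(\tau_i^{(k)})}$, does not follow from the single vanishing-mean condition $[\phi_i^{(k)},u-u^{(k)}]=0$ when $s>1$. Bramble--Hilbert requires vanishing polynomial moments up to order $s-1$, which the pre-Haar indicators of Example~\ref{exprehaar} do not enforce; a bare Poincar\'e on a cell of diameter $h^k$ yields only one factor of $h^k$. The paper's applications are all $s=1$ (Example~\ref{egprotoa2}), where your argument is complete; for $s>1$ the cited references either enrich the measurement functions with local polynomials or run a more careful duality argument, and your sketch would need to do the same. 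A minor labeling point in Part~3: the base estimate $\|\psi_i^{(k)}\|\leq Ch^{-ks}$ you use to seed the iteration is the diagonal of $A^{(k)}$, obtained from the competitor construction you gave for the \emph{upper} bound in Part~2, not from the $B^{(k)}$ bounds themselves.
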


\begin{Remark}
The wavelets $\psi_i^{(k)}, \chi_i^{(k)}$ and stiffness matrices $A^{(k)}, B^{(k)}$ can also be computed in $\operatorname{O}(N \log^{2} N \log^{2d} \frac{N}{\epsilon})$ complexity using the incomplete Cholesky factorization approach of \cite{SchaeferSullivanOwhadi17}.
\end{Remark}

\section{On $\R^N$.} \label{gamblets R^N}
Gamblets have a natural generalization in which $\H^s_0(\Omega)$ and $\H^{-s}(\Omega)$ are replaced by $\R^N$ (or a finite-dimensional space) and $\L$ is replaced by a $N\times N$ symmetric positive definite matrix $A$ \cite{OwhadiScovel:2017,  OwhScobook2018}.
This generalization is relevant to practical applications requiring the prior numerical discretization of the continuous operator $\L$.
In these applications (1) $\H^s_0(\Omega)$ is replaced by  the linear space spanned by the finite-elements $\tilde{\psi}_i$ (e.g.~piecewise linear or bi-linear tent functions on a fine mesh/grid in the setting of Example \ref{egprotoa2}) used  to discretize the operator $\L$ (2) $\I^{(q)}$ is used to label the elements $\tilde{\psi}_i$ (3) we have $\psi_i^{(q)}=\tilde{\psi}_i$ for $i\in \I^{(q)}$.
 Algorithms \ref{alggtphiior} and  \ref{algts} summarize
the discrete gamblet-transform \cite{OwhadiMultigrid:2015,OwhadiScovel:2017,  OwhScobook2018} and the discrete Gamblet solve of the linear system $\L u =f$.
The near-linear complexities mentioned in Theorem \ref{themoptimaldecomposition} are based on the near-sparsity of the interpolation matrices $R^{(k-1,k)}$ and the well-conditioning and near-sparsity of the $B^{(k)}$, and
achieved by localizing the computation of gamblets, the inversion of $B^{(k)}$ and truncating the $A^{(k)}$ (see \cite{OwhadiMultigrid:2015,OwhadiScovel:2017,  OwhScobook2018} for details).

\begin{algorithm}[h]
\caption{The Gamblet Transform.}\label{alggtphiior}
\begin{algorithmic}[1]
\STATE\label{step3g} $\psi^{(q)}_i= \tilde{\psi}_i$
\STATE\label{step5g} $A^{(q)}_{i,j}= \< \psi_i^{(q)}, \psi_j^{(q)}\>$
\FOR{$k=q$ to $2$}
\STATE\label{step7g} $B^{(k)}= W^{(k)}A^{(k)}W^{(k),T}$
\STATE\label{step9g}  $\chi^{(k)}_i=\sum_{j \in \I^{(k)}} W_{i,j}^{(k)} \psi_j^{(k)}$
\STATE\label{step12g} $ R^{(k-1,k)}=\pi^{(k-1,k)} (I^{(k)}-A^{(k)} W^{(k),T}B^{(k),-1}
W^{(k)})$
\STATE\label{step13g} $A^{(k-1)}= R^{(k-1,k)}A^{(k)}R^{(k,k-1)}$
\STATE\label{step14g} $\psi^{(k-1)}_i=\sum_{j \in  \I^{(k)}} R_{i,j}^{(k-1,k)} \psi_j^{(k)}$
\ENDFOR
\end{algorithmic}
\end{algorithm}

\begin{algorithm}[h!]
\caption{The Gamblet Solve.}\label{algts}
\begin{algorithmic}[1]
\STATE\label{step4g} $f^{(q)}_i=\int_{\Omega}f\psi_i^{(q)}$
\FOR{$k=q$ to $2$}
\STATE\label{step8g} $w^{(k)}=B^{(k),-1} W^{(k)} f^{(k)}$
\STATE\label{step10g} $v^{(k)}=\sum_{i\in \J^{(k)}}w^{(k)}_i \chi^{(k)}_i$ \COMMENT{$v^{(k)}=u^{(k)}-u^{(k-1)}$}
\STATE\label{step15g} $f^{(k-1)}=R^{(k-1,k)} f^{(k)}$
\ENDFOR
\STATE\label{step16g} $ w^{(1)}=A^{(1),-1}f^{(1)}$
\STATE\label{step17g} $v^{(1)}=\sum_{i \in \I^{(1)}} w^{(1)}_i \psi^{(1)}_i$ \COMMENT{$v^{(1)}=u^{(1)}$}
\STATE\label{step18g} $u=v^{(1)}+v^{(2)}+\cdots+v^{(q)}$
\end{algorithmic}
\end{algorithm}

\section{De-noising by filtering gamblet coefficients}\label{pde sol recovery}
In this section we will show that filtering the gamblet coefficients of $\eta$ in Problem \ref{pb1} produces an approximation of $u$ that is minimax optimal up to a multiplicative constant.

\subsection{Near minimax recovery} \label{Near Minimax Sec}
In the continuous setting, the near optimal recovery of $u$ from $\eta$ from filtering gamblet coefficients requires the finest wavelet resolution $h^q$ to be small enough compared to the level of noise, i.e. $(\frac{\sigma}{M})^\frac{2}{4s+d}>    h^{q }$. If $(\frac{\sigma}{M})^\frac{2}{4s+d}<   h^{q }$ then the proposed estimator is near optimal only for the recovery of $u^{(q)}$. Since by Theorem \ref{themoptimaldecomposition},
$\| u - u^{(q)}\|   \leq C h^{qs}\|f\|_{L^2(\Omega)}$  this is not a restriction if $h^q$ is small enough.
We  will therefore state the near optimality of the recovery in the setting of  Problem \ref{discrete prob},
a discrete variant of Problem \ref{pb1}, in which one tries to recover $u\in \Psi^{(q)}$ given the observation
 $\eta=u+\zeta$ where (by \eqref{eqkjhekjdhdjnd})  $\zeta\in \Psi^{(q)}$ is the projection of the noise \eqref{eqjkjhbejhbdd} onto $\Psi^{(q)}$.

\begin{Problem} \label{discrete prob}
    Let $u$ be an unknown element of $\Psi^{(q)} \subset \H^s_0(\Omega)$ for $q < \infty$. Let $\zeta$ be a centered Gaussian vector in $\Psi^{(q)}$ such that $\E\big[ [\phi^{(q)}_i, \zeta] [\phi^{(q)}_j, \zeta] \big] = \sigma^2 \delta_{i,j}$.
     Given the  noisy observation $\eta=u+\zeta$ and a prior bound $M$ on $\| \L u\|_{L^2}$, find an approximation of $u$ in $\Psi^{(q)}$ that is as accurate as possible in the energy norm $\|\cdot\|$.
\end{Problem}
Let $\eta$ be as in  Problem \ref{discrete prob} and gamblets be defined as in Example \ref{exprehaar}.
For $l\in \{1,\ldots,q\}$ let
\begin{equation}\label{eqkjehgdhdgfh}
\eta^{(l)}:=\sum_{k=1}^{l} [\phi^{(k),\chi},\eta]\cdot \chi^{(k)}
\end{equation}
and $\eta^{(0)} = 0 \in \Psi^{(q)}$.  Let $M>0$ and write
\begin{equation}
V_M^{(q)} = \{ u\in \Psi^{(q)} \mid \|\L u\|_{L^2(\Omega)} \leq M\}\,.
\end{equation}
Assume that $\sigma>0$ and write
\begin{equation}\label{l^dag def}
    l^\dagger = \argmin_{l\in \{0,\dots,q\}} \beta_l,
\end{equation}
for
\begin{equation} \label{beta def}
    \beta_l = \begin{cases}
    h^{2s} M^2 & \text{if } l = 0\\
    \sigma^2 h^{-(2s+d)l} + h^{2s(l+1)}M^2 & \text{if } 1 \leq l \leq q - 1\\
    h^{-(2s+d)q} \sigma^2 & \text{if } l = q \, .
    \end{cases}
\end{equation}

The following theorem shows that $\eta^{(l^\dagger)}$ is a near minimax recovery of $u$, which, with probability close to $1$, is nearly as regular (in energy norm) as $u$.
\begin{Theorem} \label{main thm}
Suppose $v^\dagger(\eta) = \eta^{(l^\dagger)}$, then there exists a constant $C$  depending only on $h$, $s$, $\|\L\|$, $\|\L^{-1}\|$, $\Omega$, $d$, and $\delta$ such that
\begin{equation} \label{minimax}
    \sup_{u\in V_M^{(q)}} \E\big[\|u - v^\dagger(\eta)\|^2 \big] < C \inf_{v(\eta)} \sup_{u \in V_M^{(q)}} \E \big[\|u - v(\eta)\|^2\big]\,,
\end{equation}
where the infimum is taken over all measurable functions $v:\Psi^{(q)} \rightarrow \Psi^{(q)}$. Furthermore, if $l^\dagger \neq 0$, then with probability at least $1 - \varepsilon$,
    \begin{equation}
        \|\eta^{(l^\dagger)}\| \leq \|u\| + C \sqrt{\log\frac{1}{\varepsilon}} \sigma^\frac{2s+d}{4s+d} M^\frac{2s+2d}{4s+d}
    \end{equation}
\end{Theorem}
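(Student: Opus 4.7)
The plan is a bias--variance analysis plus Gaussian concentration. The linearity $\eta^{(l)} = u^{(l)} + \zeta^{(l)}$, where $\zeta^{(l)}$ is defined as the analogue of $u^{(l)}$ with $\zeta$ in place of $u$, combined with the $\<\cdot,\cdot\>$-orthogonality of $u - u^{(l)}$ to $\V^{(l)}$, gives
\begin{equation*}
\|u - \eta^{(l)}\|^2 = \|u - u^{(l)}\|^2 + \|\zeta^{(l)}\|^2.
\end{equation*}
The first term is the bias, controlled by Theorem \ref{themoptimaldecomposition}: $\|u - u^{(l)}\|^2 \leq Ch^{2ls}M^2$. For the variance, use $\zeta^{(l)} = \sum_{i\in\I^{(l)}}[\phi_i^{(l)},\zeta]\psi_i^{(l)}$ together with the nested identity $\pi^{(l,q)}\pi^{(q,l)} = I^{(l)}$ (iterated from $\pi^{(k,k+1)}\pi^{(k+1,k)} = I^{(k)}$) to conclude that the coefficients $[\phi_i^{(l)},\zeta]$ are centered Gaussian with $\E\bigl[[\phi_i^{(l)},\zeta][\phi_j^{(l)},\zeta]\bigr] = \sigma^2\delta_{i,j}$, so that
\begin{equation*}
\E\|\zeta^{(l)}\|^2 = \sigma^2\,\Tr(A^{(l)}).
\end{equation*}
A Weyl-type eigenvalue count for $\L$ restricted to $\V^{(l)}$, or equivalently the bounds \eqref{eqcond} on $B^{(k)}$ combined with the dimension $|\I^{(l)}|\sim h^{-ld}$, yields $\Tr(A^{(l)}) \leq Ch^{-(2s+d)l}$. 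Evaluating at $l=l^\dagger$ gives $\sup_{u\in V_M^{(q)}}\E\|u - v^\dagger(\eta)\|^2 \leq C\beta_{l^\dagger}$.

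For the matching lower bound, I would reduce to a Bayesian problem: place a centered Gaussian prior on $u$ supported on $\W^{(l^\dagger)}$ with covariance tuned so that $\E\|\L u\|_{L^2}^2\lesssim M^2$, using $\|\L\chi_i^{(l^\dagger)}\|_{L^2}^2 = (W^{(l^\dagger)}A^{(l^\dagger)}A^{(l^\dagger)}W^{(l^\dagger),T})_{i,i}$, which scales like $h^{-2sl^\dagger}$ by \eqref{eqcond}. Equivalently, run an Assouad hypercube over the $|\J^{(l^\dagger)}|\sim h^{-l^\dagger d}$ gamblet coefficients at level $l^\dagger$ with per-coordinate amplitude of the order of $\sigma$ in the $\phi^{(l^\dagger)}$-coefficient (so pairs of hypotheses are information-theoretically indistinguishable); each coordinate contributes energy-norm separation $\sim\sigma^2 h^{-2sl^\dagger}$, giving a total lower bound $\sim\sigma^2 h^{-(2s+d)l^\dagger}\sim\beta_{l^\dagger}$, and the hypothesis class lies inside $V_M^{(q)}$ precisely at this critical $l^\dagger$. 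This produces $\inf_v\sup_u\E\|u-v\|^2 \geq c\,\beta_{l^\dagger}$, establishing \eqref{minimax}.

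For the high-probability bound, start from $\|\eta^{(l^\dagger)}\| \leq \|u^{(l^\dagger)}\| + \|\zeta^{(l^\dagger)}\| \leq \|u\| + \|\zeta^{(l^\dagger)}\|$, reducing matters to a deviation estimate for $\|\zeta^{(l^\dagger)}\|$. The map sending the iid $\N(0,\sigma^2)$ coordinates $[\phi_m^{(q)},\zeta]$ to $\|\zeta^{(l^\dagger)}\|$ is Lipschitz with constant at most $\sigma\,\|A^{(l^\dagger)}\|^{1/2} \leq C\sigma h^{-sl^\dagger}$ (the largest energy-norm generated by any unit-variance noise mode at level $l^\dagger$). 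Borell's inequality then gives
\begin{equation*}
P\!\left(\|\zeta^{(l^\dagger)}\| > \E\|\zeta^{(l^\dagger)}\| + C\sigma h^{-sl^\dagger}\sqrt{2\log(1/\varepsilon)}\right) \leq \varepsilon,
\end{equation*}
and substituting the already-derived $\E\|\zeta^{(l^\dagger)}\|\leq (C\sigma^2 h^{-(2s+d)l^\dagger})^{1/2}$ together with the minimizer scaling $h^{l^\dagger}\sim(\sigma/M)^{2/(4s+d)}$ yields the stated rate after absorbing $\sqrt{\log(1/\varepsilon)}$.

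The main obstacle is the lower bound: one must construct a prior (or hypercube) that fits inside $V_M^{(q)}$ and is still hard to estimate, which requires careful tracking of the conversion between the $L^2$ norm of $\L\chi_i^{(k)}$ and the energy norm of $\chi_i^{(k)}$ across scales, using the two-sided bounds on $B^{(k)}$. Once these scalings are pinned down, both the variance computation and the Gaussian concentration step reduce to routine linear algebra with the $B^{(k)}$ and $A^{(l)}$ matrices.
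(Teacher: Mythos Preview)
Your upper bound is essentially the paper's: the orthogonal bias--variance split $\|u-\eta^{(l)}\|^2=\|u-u^{(l)}\|^2+\|\zeta^{(l)}\|^2$, the approximation bound from Theorem~\ref{themoptimaldecomposition}, and $\E\|\zeta^{(l)}\|^2=\sigma^2\Tr(A^{(l)})\leq C\sigma^2 h^{-(2s+d)l}$ are exactly what the paper obtains, though the paper phrases the variance computation in the $\chi$-basis via the block bounds \eqref{eqcond} on $B^{(k)}$ together with the frame bound on the $\phi^\chi$ (Theorem~\ref{phi^chi bounds}).

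Where you depart is in the lower bound and the concentration step. The paper does \emph{not} run an Assouad hypercube or a Bayesian prior. Instead it reduces the minimax over $V_M^{(q)}$ to a sequence-space ellipsoid problem and then imports two off-the-shelf results: a lemma of \cite{Minimaxlinear2017} saying diagonal $\{0,1\}$-filters are within a factor $4.44$ of the full minimax over ellipsoids, and Donoho's \cite{donoho1994statistical} result that affine estimators are within $1.25$ of the full minimax on closed bounded convex bodies. The chain of inequalities (diagonal filter $\leq C\cdot$ general, via an intermediate passage through affine estimators and a covariance-monotonicity argument) then closes the loop with Theorem~\ref{near minimax recov}. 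Your Assouad route is viable and more self-contained, but you should be aware that verifying ``pairs of hypotheses are information-theoretically indistinguishable'' requires controlling $|[\phi^{(q)},u-u']|^2$ (the KL in the observation model), not $\|u-u'\|_{L^2}^2$, and the two are only equivalent through the frame inequality of Theorem~\ref{phi^chi bounds}; this is the same nontrivial ingredient the paper isolates, so you are not bypassing it.

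For the high-probability bound the paper uses a $\chi^2$ Chernoff tail (stochastically dominating $|[\phi^\chi,\zeta]|^2$ by a scaled $\chi^2$ via Theorem~\ref{phi^chi bounds}) and then converts the Chernoff parameter $z$ to $\varepsilon$ using $|\J^{(l^\dagger)}|\sim h^{-dl^\dagger}$. Your Borell/Lipschitz argument is a legitimate alternative and arguably cleaner; both routes land on the same scaling after substituting $h^{l^\dagger}\sim(\sigma/M)^{2/(4s+d)}$.
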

\begin{proof}
See Section \ref{proof of minimax}.
\end{proof}
Note that $l^\dagger = q$ occurs (approximately) when $q$ is such that $h^q > (\frac{\sigma}{M})^\frac{2}{4s+d}$, i.e. when
\begin{equation}
   \frac{\sigma}{M}<   h^{q \frac{4s+d}{2}}\,,
\end{equation}
and in this case $\eta^{(q)}$ is a near minimax optimal recovery of $u^{(q)}$. On the other extreme  $l^\dagger = 0$ occurs  (approximately) when $(\frac{\sigma}{M})^\frac{2}{4s+d} > h$, i.e. when
\begin{equation}
\frac{\sigma}{M}> h^{\frac{4s+d}{2}}
\end{equation}
 and in this case, the zero signal is a near optimal recovery.

\begin{Remark}\label{rmkhghvvgyg}
Write $\xi\sim \cN(0,\L^{-1})$ for the centered Gaussian field on $\H^s_0(\Omega)$ with covariance operator $\L^{-1}$ (i.e. $[\phi,\xi]\sim \cN(0, [\phi,\L^{-1}\phi])$ for $\phi\in \H^{-s}(\Omega)$. According to
\cite{OwhadiMultigrid:2015, OwhadiScovel:2017,  OwhScobook2018}, $\xi$ emerges as an optimal mixed strategy in an adversarial game opposing 2 players (in this game Player I selects $u\in H^s_0(\Omega)$ and player II must approximate $u$ with $v$ based on a finite number of linear measurements on $u$ and receives the loss $\|u-v\|/\|u\|$). Furthermore for $k\in \{1,\ldots,q\}$,
$\eta^{(k)}$ also admits the representation
\begin{equation}
\eta^{(k)}=\E\big[\xi\mid [\phi^{(k)},\xi]=[\phi^{(k)},\eta]\big]\,.
\end{equation}
Therefore the near-optimal recovery of $u$ can be obtained by conditioning the optimal mixed strategy $\xi$ with respect to linear measurements on $\eta$ made at level $k=l^\dagger$.
\end{Remark}

\subsection{Numerical illustrations}
\subsubsection{Example \ref{egprotoa2} with $d=1$}\label{minimax plots 1D}
Consider Example \ref{egprotoa2} with $d=1$.  Take $\Omega = [0, 1] \in \mathbb{R}$, $q = 10$ and $\phi^{(k)}_i = 1_{[\frac{i - 1}{2^k}, \frac{i}{2^k}]}$ for $1 \leq i \leq 2^k$.  Let $W^{(k)}$ be the $2^{k-1}$ by $2^k$ matrix with non-zero entries defined by $W_{i, 2i-1} = \frac{1}{\sqrt{2}}$ and $W_{i, 2i} = -\frac{1}{\sqrt{2}}$.

Let $\L := -\diiv (a  \nabla \cdot)$ with
\begin{equation}
    a(x) := \prod_{k = 1}^{10} (1 + 0.25 \cos(2^k x)).
\end{equation}

Select $f(x)$ at random using the uniform distribution on the unit $L^2(\Omega)$-sphere of $\Phi^{(q)}$.  Average errors are computed using  $3,000$  randomly (and independently) generated $f$.  Let $\zeta$  be white noise (as in \eqref{eqjkjhbejhbdd}) with $\sigma = 0.001$ and $\eta = u + \zeta$. See Fig.~\ref{plot a, f, u, eta, 1D} for numerical illustrations with a particular realization of $f$.

\begin{figure}[h] \label{minimax_1D}
    \centering
    \includegraphics[width=0.9\textwidth]{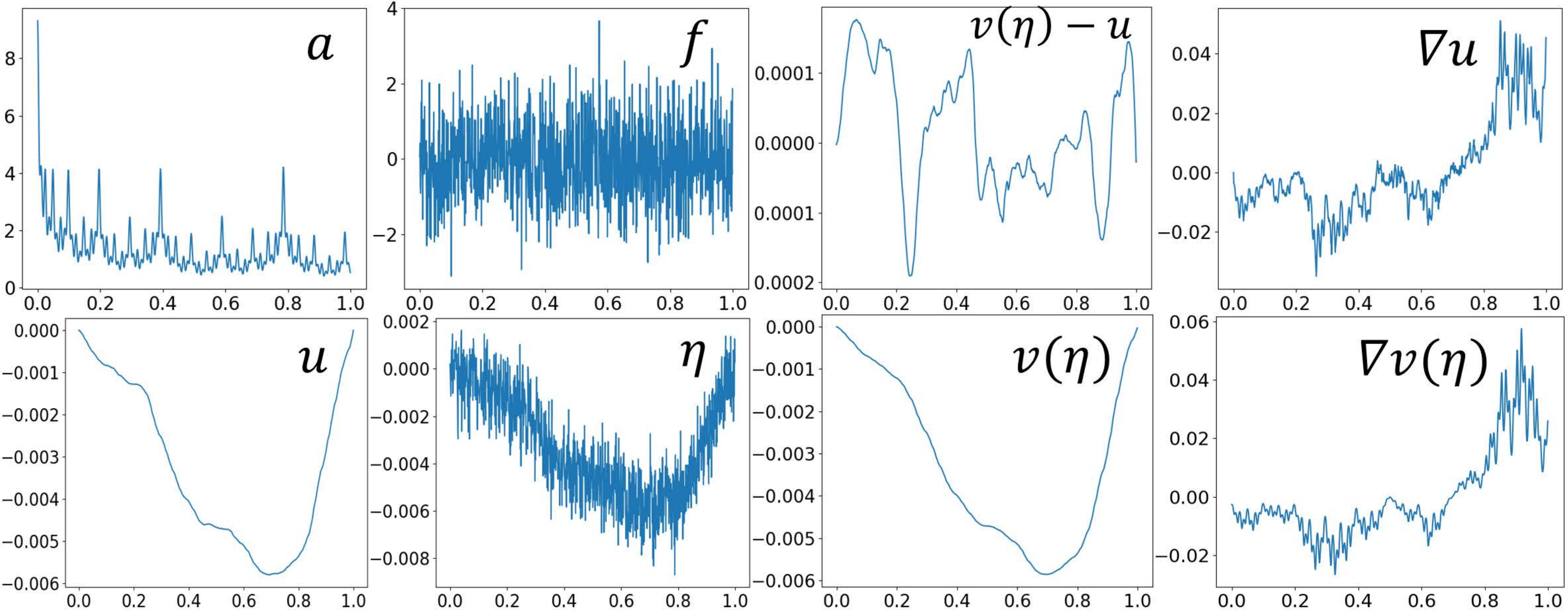}
    \caption{The plots of $a$, $f$, $u$, $\eta$, the near minimax recovery $v(\eta)=\eta^{(l^\dagger)}$, its error from $u$, and the derivatives of $u$ and $v(\eta)$}
    \label{plot a, f, u, eta, 1D}
\end{figure}

Next consider a case where $f$ is smooth, i.e. $f(x) = \frac{\sin(\pi x)}{x}$ on $x\in(0, 1]$ and $f(0) = \pi$. Let  $\zeta$ be white noise with standard deviation $\sigma = 0.01$.  See \ref{smooth plot a, f, u, eta, 1D} for the corresponding numerical illustrations.

\begin{figure}[h] \label{minimax_smooth_1D}
    \centering
    \includegraphics[width=0.9\textwidth]{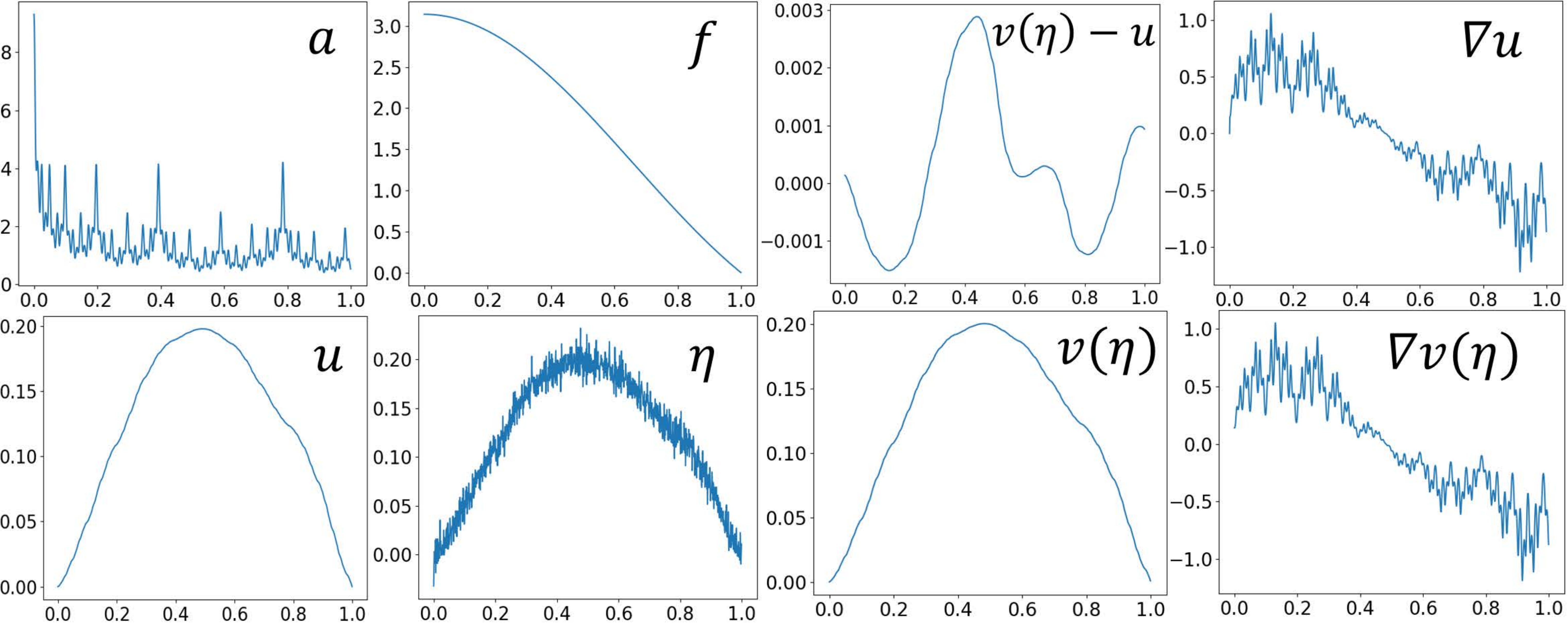}
    \caption{The plots of $a$, smooth $f$, $u$, $\eta$, the near minimax recovery $v(\eta)=\eta^{(l^\dagger)}$, its error from $u$, and the derivatives of $u$ and $v(\eta)$}
    \label{smooth plot a, f, u, eta, 1D}
\end{figure}
\begin{figure}[h]
    \centering
    \includegraphics[width=0.9\textwidth]{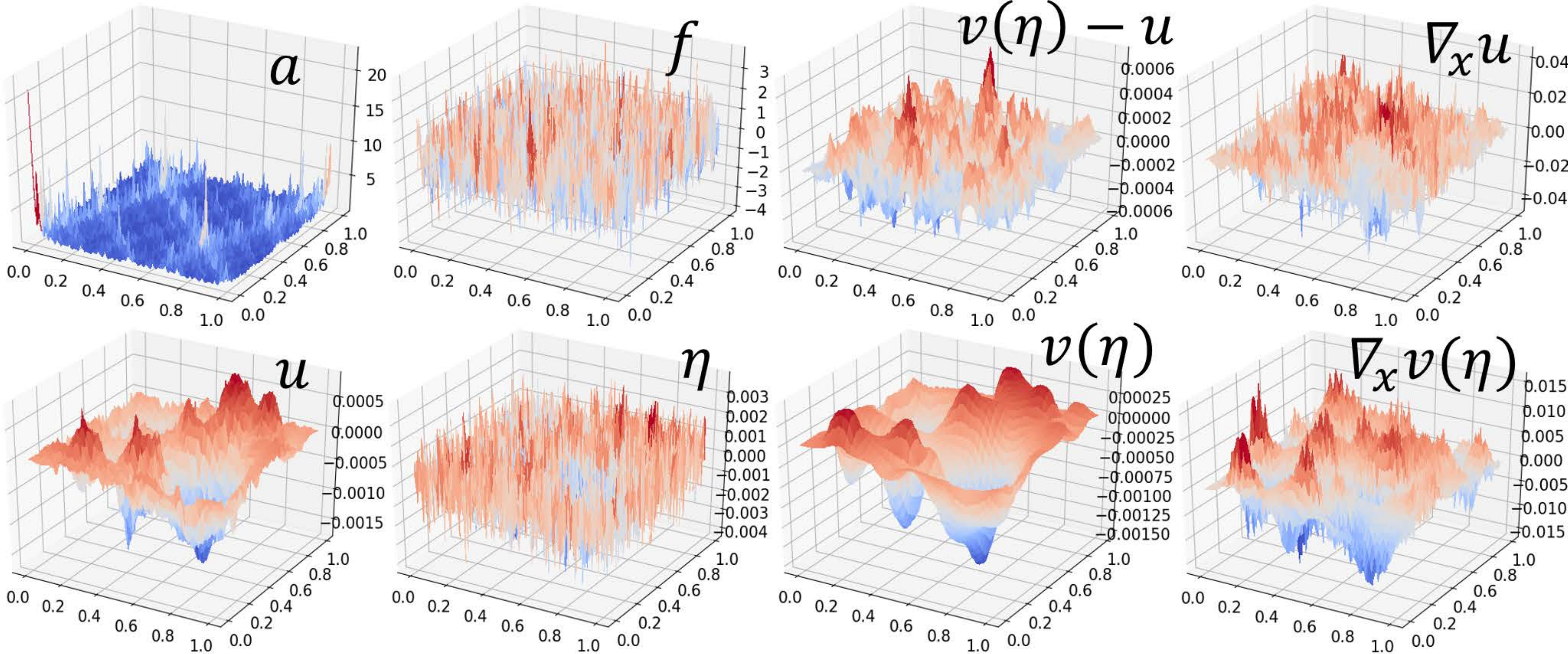}
    \caption{The plots of $a$, $f$, $u$, $\eta$, the near minimax recovery $v(\eta)=\eta^{(l^\dagger)}$, its error from $u$, and the gradient of $u$ and $v(\eta)$}
    \label{plot minimax a, f, u, eta 2D}
\end{figure}
\begin{figure}[h]
    \centering
    \includegraphics[width=0.9\textwidth]{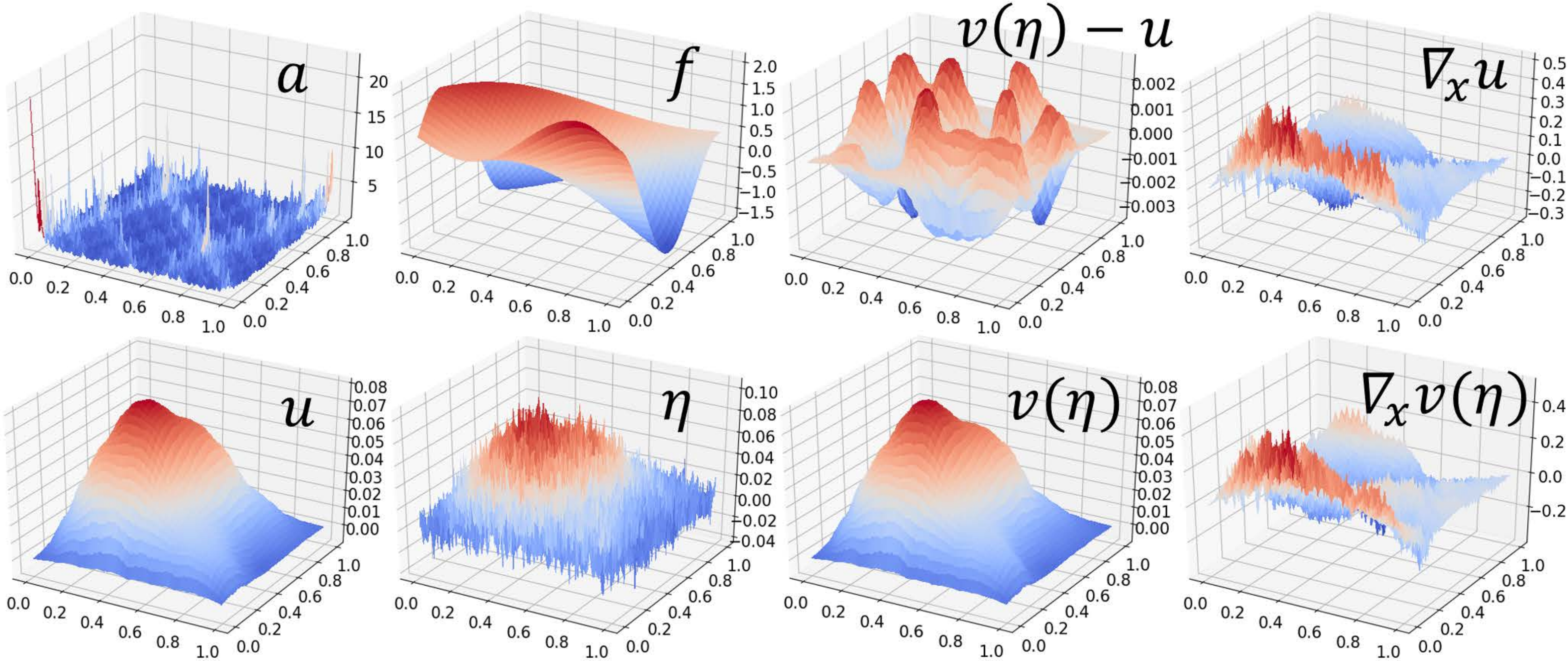}
    \caption{The plots of $a$, smooth $f$, $u$, $\eta$, the near minimax recovery $v(\eta)=\eta^{(l^\dagger)}$, its error from $u$, and the gradient of $u$ and $v(\eta)$}
    \label{plot minimax smooth a, f, u, eta 2D}
\end{figure}
\subsubsection{Example \ref{egprotoa2} with $d=2$} \label{minimax plots 2D}
Consider Example \ref{egprotoa2} with $d=2$.  Take $\Omega = [0, 1]^2$ and $q = 7$.  Use the pre-Haar wavelets  defined as $\phi^{(k)}_{i,j} = 1_{[\frac{i - 1}{2^k}, \frac{i}{2^k}] \times [\frac{j - 1}{2^k}, \frac{j}{2^k}]}$ for $1 \leq i, j \leq 2^k$.
Let $W^{(k)}$  be defined  be the $3(4^{k-1})$ by $4^k$ matrix defined as in construction 4.13 of \cite{OwhadiMultigrid:2015}.

Let $\L = -\diiv (a  \nabla \cdot)$ with
\begin{equation}
    a(x, y) := \prod_{k = 1}^{7} (1 + 0.25 \cos(2^k \pi (x + y))(1 + 0.25 \cos(2^k \pi (x - 3y)).
\end{equation}

Select $f(x)$ at random using the uniform distribution on the unit $L^2(\Omega)$-sphere of $\Phi^{(q)}$.  Average errors are computed using  $100$  randomly (and independently) generated $f$.  Let $\zeta$  be white noise (as in \eqref{eqjkjhbejhbdd}) with $\sigma = 0.001$ and $\eta = u + \zeta$. See Fig.~\ref{plot minimax a, f, u, eta 2D} for numerical illustrations with a particular realization of $f$.

Next consider a case where $f$ is smooth, i.e. $f(x, y) = \cos(3x + y) + \sin(3y) + \sin(7x - 5y)$ . Let  $\zeta$ be white noise with standard deviation $\sigma = 0.01$.  See \ref{plot minimax smooth a, f, u, eta 2D} for the corresponding numerical illustrations.

\section{Comparisons}
We will now compare \eqref{eqkjehgdhdgfh} with soft-thresholding and regularization.

\subsection{Hard and soft thresholding} \label{gamblet thresholding algo}
We call \emph{hard-thresholding} the recovery of $u$ with
\begin{equation}\label{eqhardtreshold}
v(\eta)=\sum_{k=1}^q  \sum_{i\in \J^{(k)}} H^{t^{(k)}}([\phi_i^{(k),\chi},\eta]) \,  \chi_i^{(k)}
\end{equation}
and
\begin{equation} \label{hard var threshold}
    H^{\beta}(x) = \begin{cases}
    x & |x| > \beta\\
    0 & |x| \leq \beta \, .
    \end{cases}
\end{equation}
We call \emph{soft-thresholding} the recovery of $u$ with
\begin{equation}
v(\eta)=\sum_{k=1}^q  \sum_{i\in \J^{(k)}} S^{t^{(k)}}([\phi_i^{(k),\chi},\eta]) \,  \chi_i^{(k)}
\end{equation}
and
\begin{equation} \label{soft var threshold}
    S^{\beta}(x) = \begin{cases}
    x - \beta \, \text{sgn}(x) & |x| > \beta\\
    0 & |x| \leq \beta \, .
    \end{cases}
\end{equation}

The parameters $(t_1,\ldots,t_q)$ are adjusted to achieve minimal average errors.
Since the mass matrix of $\phi^{\chi}$ is comparable to identity (see Thm.~\ref{phi^chi bounds}),
decomposing $f$ over $\phi^\chi$ and the bi-orthogonality identities $[\phi^\chi_i,\chi_j]=\delta_{i,j}$ imply that
$[f,\chi]$ is approximately uniformly sampled on the unit sphere of $\R^{\J}$ and the variance of
$[f,\chi_i^{(k)}]$ can be approximated by $1/|\J|$. Therefore
$[\phi^\chi,u]= B^{(k),-1} [f,\chi^{(k)}]$ and
\eqref{eqcond} imply that the standard deviation of $[\phi^{(k),\chi},u]$ can be approximated by $h^{-2 ks}/\sqrt{|\J|}$.
Therefore optimal choices follow the power law $t_k = h^{-2k s} t_0$ for some parameter $t_0$.

\subsection{Regularization}\label{regularized L^2 norm min}
We call \emph{regularization} the recovery of $u$ with $v(\eta)$ defined as the minimizer of
\begin{equation}
\|v(\eta) - \eta\|_{L^2(\Omega)}^2 + \alpha\|v(\eta)\|^2\,.
\end{equation}
For practical implementation we consider
 $A=\<\tilde{\psi}_i,\tilde{\psi}_j\>$,  the $N\times N$ stiffness matrix obtained by discretizing $\L$ with finite elements
$\tilde{\psi}_1,\ldots,\tilde{\psi}_N$, and write
$\eta=\sum_{i=1}^N y_i \tilde{\psi}_i$ and $\zeta=\sum_{i=1}^N z_i \tilde{\psi}_i$
for the  representation of $\eta$ and $\zeta$ over this basis ($\eta=u+\zeta$ and $z\sim \mathcal{N}(0,\sigma^2 I_d) $, writing $I_d$ for the identity matrix).
In that discrete setting we have
\begin{equation}
v(\eta)=\sum_{i=1}^N x_i \tilde{\psi}_i
\end{equation}
where $x$ is the minimizer of
\begin{equation}\label{eqkejdhidhdh}
|x-y|^2 + \alpha x^T A x
\end{equation}
Theorem \ref{operator min alg} and Corollary \ref{cordjhdghdg} show that
this recovery corresponds to minimizing the energy norm  $\|v\|^2=x^T A x$  subject to $|x-y|\leq \gamma$ with
\begin{equation}\label{eqjwhdgdgyuegduydg}
\gamma=|(I-(\alpha A+ I)^{-1})y|
\end{equation}
In practice $\gamma$ would
correspond to a level of confidence (e.g. chosen so that $\mathbb{P}[|z| > \gamma] = 0.05$ with $z\sim \mathcal{N}(0,\sigma^2 I_d) $).
\begin{Theorem}\label{operator min alg}
Let $x$ be the minimizer of
\begin{equation}\label{eqkjehgdhgjdhg}
\begin{cases}
\text{Minimize }&x^T A x\\
\text{subject to }&|x-y|\leq \gamma\,.
\end{cases}
\end{equation}
If $|y|\leq \gamma$, then $x = 0$.  Otherwise (if $|y|>\gamma$), then $x = (\alpha A + I)^{-1}y$ where $\alpha$ is defined as the solution of \eqref{eqjwhdgdgyuegduydg}.
\end{Theorem}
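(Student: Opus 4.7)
The plan is to recognize this as a strictly convex quadratic program with a convex (Euclidean ball) constraint and solve it via the KKT conditions. Since $A$ is symmetric positive definite, the objective $x \mapsto x^T A x$ is strictly convex and the feasible set $\{x : |x-y| \leq \gamma\}$ is convex and compact, so a unique minimizer exists.

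First I would dispatch the case $|y|\leq \gamma$: here $x=0$ lies in the feasible set and achieves the value $0$, which is the global infimum of the positive semidefinite quadratic form $x^T A x$, proving $x=0$ is the unique minimizer. For the case $|y|>\gamma$, note that $x=0$ is infeasible, so by strict convexity together with the observation that shrinking any interior-feasible $x$ toward $0$ strictly decreases the objective, the constraint must be active at the minimizer: $|x-y|=\gamma$. Writing the Lagrangian $L(x,\lambda)=x^T A x+\lambda(|x-y|^2-\gamma^2)$ with $\lambda>0$, the stationarity condition $\nabla_x L=0$ gives $(A+\lambda I)x=\lambda y$, and setting $\alpha=1/\lambda>0$ yields
\begin{equation}
x=(\alpha A+I)^{-1} y.
\end{equation}
Substituting back into the active-constraint equation $|x-y|=\gamma$ and using $x-y=-(I-(\alpha A+I)^{-1})y$ recovers exactly \eqref{eqjwhdgdgyuegduydg}.

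The one nontrivial thing to verify is that such an $\alpha>0$ exists and is unique for $|y|>\gamma$. For this, I would diagonalize $A=UDU^T$ with $D=\operatorname{diag}(d_i)$, $d_i>0$, and set $\tilde y=U^T y$. Then
\begin{equation}
g(\alpha):=|(I-(\alpha A+I)^{-1})y|^2=\sum_i\Bigl(\frac{\alpha d_i}{\alpha d_i+1}\Bigr)^2 \tilde y_i^2.
\end{equation}
Each summand is continuous and strictly increasing in $\alpha$ on $(0,\infty)$, with $g(0^+)=0$ and $g(\infty)=|y|^2$; since $|y|^2>\gamma^2>0$, the intermediate value theorem together with strict monotonicity yields a unique $\alpha>0$ with $g(\alpha)=\gamma^2$. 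Combined with strict convexity of the primal, this also guarantees that the KKT point is the global minimizer.

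The main obstacle is really just this uniqueness/existence step for $\alpha$; once the spectral decomposition is in hand it is a short monotonicity argument, but without it the dependence of $|(I-(\alpha A+I)^{-1})y|$ on $\alpha$ is opaque. Everything else is standard strict-convexity plus KKT.
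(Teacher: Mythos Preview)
Your proof is correct and follows essentially the same approach as the paper: both derive the first-order optimality condition (the paper phrases it geometrically as ``the hyperplane tangent to the ellipsoid must be tangent to the sphere,'' which is exactly your KKT stationarity $Ax=\alpha^{-1}(y-x)$) and then fix $\alpha$ via the active constraint. Your version is in fact more complete, since the paper does not verify existence/uniqueness of $\alpha$, whereas your spectral monotonicity argument for $g(\alpha)$ cleanly fills that gap.
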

\begin{proof}
    Supposing $|y|\leq \gamma$, then if $x = 0$, then $|x - y| \leq \gamma$.  Further, $x = 0$ is the global minimum of $x^T A x$.  Therefore in this case, $x = 0$.

    If $|y|>\gamma$ then at minimum $x$ the hyperplane tangent to the ellipsoid of center zero must also be tangent to the sphere of center $y$ which implies that $Ax= \alpha^{-1} (y-x)$ for some parameter $\alpha$. We therefore have $x=(\alpha A+ I)^{-1} y$ and $\alpha$ is determined by the equation $|x-y|=\gamma$ which leads to
    \begin{equation}\label{eqkjekdhdkhdus}
    |(I-(\alpha A+ I)^{-1})y|=\gamma\,.
    \end{equation}
\end{proof}

\begin{Corollary}\label{cordjhdghdg}
    If $|y| > \gamma$, then the minimizers of \eqref{eqkjehgdhgjdhg} and \eqref{eqkejdhidhdh} are identical with $\alpha$ identified as in \eqref{eqkjekdhdkhdus}.
\end{Corollary}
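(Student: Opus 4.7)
The plan is to observe that both problems reduce to the same first-order stationarity condition once $\alpha$ is fixed, so the corollary is essentially a one-line consequence of Theorem \ref{operator min alg} combined with an elementary calculus check.

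First I would recall from Theorem \ref{operator min alg} that under the hypothesis $|y|>\gamma$, the minimizer $x^\star$ of the constrained problem \eqref{eqkjehgdhgjdhg} satisfies
\begin{equation*}
x^\star = (\alpha A + I)^{-1} y,
\end{equation*}
where $\alpha$ is the (unique, positive) scalar determined by the equation \eqref{eqkjekdhdkhdus}, i.e. by enforcing $|x^\star - y| = \gamma$.

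Next I would turn to \eqref{eqkejdhidhdh} and note that, because $A$ is symmetric positive definite and $\alpha>0$, the function $x \mapsto |x-y|^2 + \alpha x^T A x$ is strictly convex and coercive, so it has a unique minimizer characterized by its first-order condition. Differentiating and setting the gradient to zero gives
\begin{equation*}
2(x-y) + 2\alpha A x = 0 \quad\Longleftrightarrow\quad (I + \alpha A) x = y,
\end{equation*}
so the unique minimizer of \eqref{eqkejdhidhdh} is also $(\alpha A + I)^{-1} y$, for the same $\alpha$.

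Comparing the two expressions finishes the proof. There is no real obstacle; the only point requiring minor care is to make explicit that the $\alpha$ appearing in \eqref{eqkejdhidhdh} is identified with the Lagrange multiplier of the constrained problem through \eqref{eqkjekdhdkhdus}, which is precisely what the statement asserts. Uniqueness of the minimizer on both sides (strict convexity of the quadratic) then makes the identification unambiguous.
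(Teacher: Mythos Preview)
Your proposal is correct and follows essentially the same approach as the paper: both compute the first-order optimality condition $\nabla_x(|x-y|^2+\alpha x^T A x)=0$, obtain $(I+\alpha A)x=y$, and compare with the formula from Theorem~\ref{operator min alg}. Your version is slightly more careful in spelling out strict convexity and uniqueness, but the core argument is identical.
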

\begin{proof}
     $\nabla_x(|x-y|^2+\alpha x^T A x)=0$ is equivalent to $x-y+ \alpha Ax=0$ which leads to $x=(\alpha A+ I)^{-1} y$.
\end{proof}

\subsection{Numerical experiments}
\subsubsection{Example \ref{egprotoa2} with $d=1$} \label{comp of methods 1D}
Consider the same example as in Subsection \ref{minimax plots 1D}.
The following table shows errors measured in $L^2$ and energy norms averaged over $3,000$ independent random realizations of $f$  and $\zeta$ ($f$ is uniformly distributed over the unit sphere of $L^2(\Omega)$ and $\zeta$ is white noise with $\sigma = 0.001$).  The hard variable thresholding recovery is as defined in section \ref{gamblet thresholding algo}, regularization recovery is as defined in section \ref{regularized L^2 norm min}, and the near minimax recovery refers to $v^\dagger(\eta)=\eta^{(l^\dagger)}$ in Theorem \ref{main thm}.
\begin{center}
\begin{tabular}{ |p{4.25cm}||p{1.768cm}|p{1.768cm}||p{1.768cm}|p{1.768cm}|  }
 \hline
 \multicolumn{5}{|c|}{Comparison of de-noising algorithms performance} \\
 \hline
 Algorithm & $\L$ Error AVG & $\L$ Error STDEV & $L^2$ Error AVG & $L^2$ Error STDEV \\
 \hline
 Hard variable threshold & $4.78 \times 10^{-3}$ & $9.64 \times 10^{-4}$ & $2.25 \times 10^{-4}$ & $1.07 \times 10^{-4}$ \\
 Soft variable threshold & $4.27 \times 10^{-3}$ & $7.70 \times 10^{-4}$ & $1.65 \times 10^{-4}$ & $5.63 \times 10^{-5}$\\
 Regularization recovery & $4.37 \times 10^{-3}$ & $7.93 \times 10^{-4}$ & $2.82 \times 10^{-4}$ & $7.83 \times 10^{-5}$ \\
 Near minimax recovery & $3.90 \times 10^{-3}$ & $5.30 \times 10^{-4}$ & $1.24 \times 10^{-4}$ & $2.50 \times 10^{-5}$ \\
 \hline
\end{tabular}
\end{center}

For reference, the average and standard deviation of the (discrete) energy norm of  $\zeta$ used in this trial were $1.68$ and $0.06$ respectively.

\subsubsection{Example \ref{egprotoa2} with $d=2$} \label{comp of methods 2D}
Consider the same example as in Subsection \ref{minimax plots 2D}.
The following table shows  errors measured in $L^2$ and energy norms averaged over $100$ independent random realizations of $f$  and $\zeta$ ($f$ is uniformly distributed over the unit sphere of $L^2(\Omega)$ and $\zeta$ is white noise with $\sigma = 0.001$).

\begin{center}
\begin{tabular}{ |p{4.25cm}||p{1.768cm}|p{1.768cm}||p{1.768cm}|p{1.768cm}|  }
 \hline
 \multicolumn{5}{|c|}{Comparison of de-noising algorithms performance} \\
 \hline
 Algorithm & $\L$ Error AVG & $\L$ Error STDEV & $L^2$ Error AVG & $L^2$ Error STDEV \\
 \hline
 Hard variable threshold & $6.95 \times 10^{-3}$ & $9.78 \times 10^{-5}$ & $1.42 \times 10^{-4}$ & $7.76 \times 10^{-6}$\\
 Soft variable threshold & $7.18 \times 10^{-3}$ & $1.57 \times 10^{-4}$ & $1.90 \times 10^{-4}$ & $2.35 \times 10^{-5}$\\
 Regularization recovery & $6.90 \times 10^{-3}$ & $1.03 \times 10^{-4}$ & $1.86 \times 10^{-4}$ & $1.88 \times 10^{-5}$\\
 Near minimax recovery & $6.94 \times 10^{-3}$ & $9.58 \times 10^{-5}$ & $1.40 \times 10^{-4}$ & $7.29 \times 10^{-6}$\\
 \hline
\end{tabular}
\end{center}

For reference, the average and standard deviation of the (discrete) $\L$ norm of this trial's $\zeta$ were $0.250$ and $0.06$ respectively.

\section{Proof of near minimax recovery in energy norm}\label{proof of minimax}
This section will provide a proof of Theorem \ref{main thm}. Throughout this section we will use the pre-Haar wavelets of Subsection \ref{pre-haar} as measurement functions $\phi_i^{(k)}$.

\subsection{Bounds on the covariance matrix of the noise in the gamblet basis}
For $s<k$, write $\pi^{(s,k)}:=\pi^{(s,s+1)}\pi^{(s+1,s+2)}\cdots \pi^{(k-1,k)}$ and let $\pi^{(k,s)}:=(\pi^{(s,k)})^T$.
Let $\pi^{(k,k)}:=I^{(k)}$. Let $Z$ be the $\J\times \J$ matrix defined by
\begin{equation}
Z_{i, j} = (N^{(s),T} \pi^{(s, k)} N^{(k)})_{i, j} \text{ for }i \in \J^{(s)} \text{ and }j \in \J^{(k)}\,.
\end{equation}

Observe that by linearity
\begin{equation}
    [\phi^{\chi}, \eta] = [\phi^{\chi}, u] + [\phi^\chi, \zeta]\,,
\end{equation}
where $[\phi^\chi, \zeta]$ is a centered Gaussian vector whose covariance matrix  is $\sigma^2 Z$ as shown in the following lemma.

\begin{Lemma}\label{Noise Cov}
It holds true that
\begin{equation}
\E\big[[\phi^\chi,\zeta] [\phi^\chi,\zeta]^T  \big]=\sigma^2 Z.
\end{equation}
Furthermore, for $x\in \R^\J$,
\begin{equation}\label{eqlkjhkjhjhhuih}
x^T Z x=\|x\cdot \phi^\chi\|_{L^2(\Omega)}^2\,.
\end{equation}
\end{Lemma}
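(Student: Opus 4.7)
The plan is to reduce both identities to a single bookkeeping computation expressing every $\phi^{(k),\chi}_i$ in the finest-scale basis $\{\phi^{(q)}_m\}_{m\in\I^{(q)}}$, and then exploiting two parallel orthogonality facts: the noise covariance assumption $\E[[\phi^{(q)}_m,\zeta][\phi^{(q)}_{m'},\zeta]]=\sigma^2\delta_{m,m'}$, and the $L^2$-orthonormality of the pre-Haar wavelets $\phi^{(q)}_m=1_{\tau^{(q)}_m}|\tau^{(q)}_m|^{-1/2}$, which are supported on the disjoint cells of the finest partition and hence satisfy $\int_\Omega \phi^{(q)}_m\phi^{(q)}_{m'}=\delta_{m,m'}$.

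First I would unfold the definitions. By $\phi^{(k),\chi}_i=\sum_{j\in\I^{(k)}} N^{(k),T}_{i,j}\phi^{(k)}_j$ together with the iterated nesting relation $\phi^{(k)}_j=\sum_{m\in\I^{(q)}}\pi^{(k,q)}_{j,m}\phi^{(q)}_m$, I can write, for any $k\in\{1,\ldots,q\}$ and $i\in\J^{(k)}$,
\begin{equation}
\phi^{(k),\chi}_i=\sum_{m\in\I^{(q)}}\bigl(N^{(k),T}\pi^{(k,q)}\bigr)_{i,m}\phi^{(q)}_m .
\end{equation}
This is the only structural step needed; everything afterward is a contraction against a Kronecker delta.

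Second, for $i\in\J^{(s)}$ and $j\in\J^{(k)}$, I apply bilinearity of expectation to obtain
\begin{equation}
\E\bigl[[\phi^{(s),\chi}_i,\zeta]\,[\phi^{(k),\chi}_j,\zeta]\bigr]
=\sigma^2\sum_{m\in\I^{(q)}}\bigl(N^{(s),T}\pi^{(s,q)}\bigr)_{i,m}\bigl(N^{(k),T}\pi^{(k,q)}\bigr)_{j,m}
=\sigma^2\bigl(N^{(s),T}\pi^{(s,q)}\pi^{(q,k)}N^{(k)}\bigr)_{i,j}.
\end{equation}
The formula $Z_{i,j}=\sigma^{-2}\E[\cdots]$ then requires the identity $\pi^{(s,q)}\pi^{(q,k)}=\pi^{(s,k)}$ (for $s\le k$; the case $s>k$ being symmetric via transposition). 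I would prove this by induction, collapsing the inner factor $\pi^{(q-1,q)}\pi^{(q,q-1)}=I^{(q-1)}$ using the telescope identity $\pi^{(l,l+1)}\pi^{(l+1,l)}=I^{(l)}$, and iterating down until only $\pi^{(s,k)}$ remains. This little lemma is the only place where actual algebra happens, and it is the main (mild) obstacle.

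Third, I would repeat exactly the same contraction for the $L^2$ identity. Writing $x\cdot\phi^\chi=\sum_{k,i}x_i\phi^{(k),\chi}_i=\sum_m\bigl(\sum_{k,i}x_i(N^{(k),T}\pi^{(k,q)})_{i,m}\bigr)\phi^{(q)}_m$ and using $\int\phi^{(q)}_m\phi^{(q)}_{m'}=\delta_{m,m'}$ I would get
\begin{equation}
\|x\cdot\phi^\chi\|_{L^2(\Omega)}^2
=\sum_{s,k}\sum_{i\in\J^{(s)},\,j\in\J^{(k)}} x_i x_j\bigl(N^{(s),T}\pi^{(s,q)}\pi^{(q,k)}N^{(k)}\bigr)_{i,j}
=x^T Z x,
\end{equation}
where the last equality again invokes $\pi^{(s,q)}\pi^{(q,k)}=\pi^{(s,k)}$. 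Both claims of the lemma then follow from the same one-line algebraic reduction, which is why I would present them in parallel.
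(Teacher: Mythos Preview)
Your proposal is correct and follows essentially the same approach as the paper's proof. The paper is terser: it expands the coarser $\phi^{(s),\chi}$ directly in terms of $\phi^{(k)}$ via $\phi^{(s)}=\pi^{(s,k)}\phi^{(k)}$ and uses the $L^2$-orthonormality of the pre-Haar system at level $k$, thereby arriving at $N^{(s),T}\pi^{(s,k)}N^{(k)}$ without the detour through level $q$ and the explicit telescope $\pi^{(s,q)}\pi^{(q,k)}=\pi^{(s,k)}$; your version spells out this telescoping identity, which is the same fact the paper is using implicitly.
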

\begin{proof}
  $\E\big[[\phi_i^{(q)},\zeta][\phi_j^{(q)},\zeta]\big]=\sigma^2 \<\phi_i^{(q)},\phi_j^{(q)}\>_{L^2(\Omega)}=\sigma^2 \delta_{i,j}$ implies
\begin{equation}
\E\Big[[\phi_i^{(k),\chi},\zeta] [\phi_j^{(k),\chi},\zeta] \Big]=(N^{(k),T}N^{(k)})_{i,j} \sigma^2
\end{equation}
Therefore, $\phi^{(s)}=\pi^{(s,k)}\phi^{(k)}$ for $1\leq s<k \leq q$ implies
\begin{equation}
\E\Big[[\phi_i^{(s),\chi},\zeta] [\phi_j^{(k),\chi},\zeta] \Big]=(N^{(s),T}\pi^{(s,k)}N^{(k)})_{i,j} \sigma^2\,,
\end{equation}
for $i\in \J^{(s)}$ and $j\in \J^{(k)}$.  The proof of \eqref{eqlkjhkjhjhhuih} is identical.
\end{proof}

There exists a large literature comparing thresholding and component filtering with minimax signal recovery.  Most rigorous results make the assumption that the noise in the decomposition (i.e. $\epsilon z_I$ in $y_I = \theta_I + \epsilon z_I$ as in  \cite{Donoho1992soft, Donoho1990minimax}) is i.i.d. $\N(0, \sigma^2)$.  Although the situation in the gamblet decomposition is slightly different (since the entries of
$[\phi^\chi, \zeta]$ may be correlated and non-identically distributed), the uniform bound  $I_d \leq Z \leq (1+C) I_d$ obtained in
Theorem \ref{phi^chi bounds} will be sufficient to prove near minimax recovery in energy norm.
To obtain Theorem \ref{phi^chi bounds} we will first need the following lemma.
\begin{Lemma}\label{lemdjhegd3uyd}
It holds true that for $k\in \{2,\ldots,q\}$, $z^{(k)}\in \R^{\J^{(k)}}$ and $y^{(k-1)}\in \R^{\I^{(k-1)}}$
\begin{equation}
|z^{(k)}|^2+|y^{(k-1)}|^2 \leq |\pi^{(k,k-1)}y^{(k-1)}+ N^{(k)} z^{(k)}|^2\leq (1+C) |z^{(k)}|^2+|y^{(k-1)}|^2
\end{equation}
where $C$ depends only on $\|\L\|, \|\L^{-1}\|, \Omega, s, d$ and $\delta$.
\end{Lemma}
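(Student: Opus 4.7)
The plan is to reinterpret the Euclidean norm on the left as an $L^2$-norm in $\Phi^{(k)}$, turning the lemma into an analytic statement about the near-orthonormality of the coarse system $\{\phi^{(k-1)}_i\}$ together with the dual-wavelet system $\{\phi^{(k),\chi}_j\}$ inside $\Phi^{(k)}$. Since the pre-Haar functions $\phi^{(k)}_i$ are $L^2$-orthonormal (Example~\ref{exprehaar}), $\phi^{(k-1)}=\pi^{(k-1,k)}\phi^{(k)}$ (nesting relation) and $\phi^{(k),\chi}=N^{(k),T}\phi^{(k)}$ (definition), so that
\begin{equation*}
|\pi^{(k,k-1)}y^{(k-1)}+N^{(k)}z^{(k)}|^{2} = \|y^{(k-1),T}\phi^{(k-1)}+z^{(k),T}\phi^{(k),\chi}\|_{L^{2}(\Omega)}^{2} = |y^{(k-1)}|^{2}+2y^{(k-1),T}Mz^{(k)}+z^{(k),T}N^{(k),T}N^{(k)}z^{(k)},
\end{equation*}
where $M:=\pi^{(k-1,k)}N^{(k)}=\langle\phi^{(k-1)},\phi^{(k),\chi}\rangle_{L^{2}}$ is the coarse--detail cross-Gram.

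The diagonal block $N^{(k),T}N^{(k)}$ admits the factorization $N^{(k),T}N^{(k)}=B^{(k),-1}(W^{(k)}(A^{(k)})^{2}W^{(k),T})B^{(k),-1}$. The lower bound $N^{(k),T}N^{(k)}\succeq I$ falls out of a matrix Cauchy--Schwarz inequality that exploits $W^{(k)}(W^{(k)})^{T}=J^{(k)}$ from the pre-Haar setup: one checks $W^{(k)}(A^{(k)})^{2}W^{(k),T}\succeq(W^{(k)}A^{(k)}W^{(k),T})(W^{(k)}W^{(k),T})^{-1}(W^{(k)}A^{(k)}W^{(k),T})=(B^{(k)})^{2}$ and conjugates by $B^{(k),-1}$. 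The upper bound $N^{(k),T}N^{(k)}\preceq(1+C)I$ follows by feeding the two-sided conditioning of $B^{(k)}$ in \eqref{eqcond} of Theorem~\ref{themoptimaldecomposition} and the analogous spectral bounds on $A^{(k)}$ (in terms of $\|\L\|$ and $\|\L^{-1}\|$) into the same factorization, with $W^{(k)}W^{(k),T}=J^{(k)}$ controlling the operator norm of the $W$-sandwich.

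For the cross-coupling term $2y^{T}Mz$ I would perform a Pythagoras decomposition in $L^{2}$ with respect to the splitting $\Phi^{(k)}=\Phi^{(k-1)}\oplus((\Phi^{(k-1)})^{\perp}\cap\Phi^{(k)})$: this lets the coarse component of $z^{(k),T}\phi^{(k),\chi}$ be absorbed into a redefinition of $y^{(k-1)}$, after which the residual Gram $N^{(k),T}N^{(k)}-M^{T}M$ is controlled by combining the estimate above with the biorthogonality $[\phi^{(k-1)},\chi^{(k)}]=0$ (equivalently $\pi^{(k-1,k)}W^{(k),T}=0$) and the exponential localization of the gamblet matrices $A^{(k)}$ and $B^{(k),-1}$ supplied by parts (3) and (4) of Theorem~\ref{themoptimaldecomposition}.

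The main obstacle is precisely this cross-coupling bookkeeping: a naive Young inequality $2|y^{T}Mz|\leq\lambda|y|^{2}+\lambda^{-1}\|M\|^{2}|z|^{2}$ would yield $(1+\lambda)|y|^{2}$ rather than $|y|^{2}$ on the right-hand side, so the asymmetric form of the upper bound forbids any loss on the $|y|^{2}$ block. The argument therefore has to exploit the pre-Haar-specific cancellation encoded in $\pi^{(k-1,k)}W^{(k),T}=0$ together with the explicit form $N^{(k)}=A^{(k)}W^{(k),T}B^{(k),-1}$ so that the troublesome part of $M$ is quantitatively subordinate to the $z$-block alone; once this is in place, assembling the three estimates gives the lemma.
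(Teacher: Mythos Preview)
Your route diverges from the paper's precisely at the cross-term, and that is where your argument stalls. The paper does not expand the square into three blocks and then estimate them separately. Instead it introduces the oblique projection $P^{(k)}:=\pi^{(k,k-1)}R^{(k-1,k)}$ and records two algebraic facts: first, $P^{(k)}N^{(k)}=0$, which comes from $R^{(k-1,k)}=A^{(k-1)}\pi^{(k-1,k)}\Theta^{(k)}$, $\Theta^{(k)}A^{(k)}=I^{(k)}$ and $\pi^{(k-1,k)}W^{(k),T}=0$, and together with $W^{(k)}N^{(k)}=J^{(k)}$ yields the exact identity $N^{(k)}=(I^{(k)}-P^{(k)})W^{(k),T}$; second, $W^{(k)}P^{(k)}=0$, so that $\operatorname{Im}(W^{(k),T})\perp\operatorname{Im}(P^{(k)})$ in the Euclidean inner product. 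These give
\[
\bigl|\pi^{(k,k-1)}y^{(k-1)}+N^{(k)}z^{(k)}\bigr|^{2}=|z^{(k)}|^{2}+\bigl|P^{(k)}\bigl(\pi^{(k,k-1)}y^{(k-1)}-W^{(k),T}z^{(k)}\bigr)\bigr|^{2},
\]
after which only the operator-norm bound $\|P^{(k)}\|_{2}\le C$ (quoted from \cite{OwhScobook2018}) is invoked. No exponential localization enters at all.

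Your proposed fix does not close the gap. The $L^{2}$-Pythagoras split you describe yields $|\pi^{(k,k-1)}y+N^{(k)}z|^{2}=|y+Mz|^{2}+|z|^{2}$ (since $W^{(k)}N^{(k)}=J^{(k)}$), so the cross-term $2y^{T}Mz$ is merely relocated, not removed. The biorthogonality you cite is $\pi^{(k-1,k)}W^{(k),T}=0$, but $M=\pi^{(k-1,k)}A^{(k)}W^{(k),T}B^{(k),-1}$ has $A^{(k)}$ inserted in the middle, which destroys that cancellation; $M$ does not vanish for generic $\L$. Since the target upper bound carries the \emph{exact} coefficient $1$ on $|y^{(k-1)}|^{2}$, any argument that lets the $y$-block absorb even an $\epsilon|y|^{2}$ is already too weak, and a nonzero $2y^{T}Mz$ cannot be pushed entirely onto $C|z|^{2}$ (let $|y|\to\infty$). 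The localization estimates in Theorem~\ref{themoptimaldecomposition}(3)--(4) are therefore a red herring: what is actually needed is the algebraic identity $P^{(k)}N^{(k)}=0$, where the troublesome $A^{(k)}$ is neutralized via $\Theta^{(k)}A^{(k)}=I^{(k)}$ rather than by any decay bound.
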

\begin{proof}
We start with the argument of the proof of \cite[Lem.~14.2]{OwhScobook2018}.

Since $\Img(W^{(k),T})$ and $\Img(\pi^{(k,k-1)})$ are orthogonal and
\begin{equation}
\dim(\R^{\I^{(k)}})=\dim\big(\Img(W^{(k),T})\big)+\dim\big(\Img(\pi^{(k,k-1)})\big)\,,
\end{equation}
for $x\in \R^{\I^{(k)}}$ there exists a unique $z\in \R^{\J^{(k)}}$ and $y\in \R^{\I^{(k-1)}}$ such that
\begin{equation}
x=W^{(k),T}z+\pi^{(k,k-1)} y\,.
\end{equation}
$W^{(k)}\pi^{(k,k-1)}=0$ and $W^{(k)}W^{(k),T}=J^{(k)}$ implies
 $W^{(k)} x=z$.   $R^{(k-1,k)}\pi^{(k,k-1)}=I^{(k-1)}$ implies $R^{(k-1,k)}x =R^{(k-1,k)} W^{(k),T}z+y$.
 Writing
 \begin{equation}
 P^{(k)}=\pi^{(k,k-1)}R^{(k-1,k)}\,,
 \end{equation}
 observe that $P^{(k)}$ is a projection (since $ (P^{(k)})^2= P^{(k)}$) and
\begin{equation}\label{eqjhegdjhgjd}
x= W^{(k),T} W^{(k)}x+ P^{(k)}(I^{(k)}- W^{(k),T}W^{(k)})x \,.
\end{equation}
Observe that $N^{(k)}=A^{(k)}W^{(k),T}B^{(k),-1}$ implies that $W^{(k)} N^{(k)}=J^{(k)}$ and
$P^{(k)}N^{(k)}=\pi^{(k,k-1)} A^{(k-1)}\pi^{(k-1,k)}\Theta^{(k)}N^{(k)}=0$.
Therefore, taking  $x=N^{(k)} z^{(k)}$ in \eqref{eqjhegdjhgjd} implies
\begin{equation}
N^{(k)} z^{(k)}= W^{(k),T} z^{(k)}- P^{(k)} W^{(k),T}z^{(k)} \,.
\end{equation}
Using $P^{(k)} \pi^{(k,k-1)}=\pi^{(k,k-1)}$ we deduce that
\begin{equation}
\pi^{(k,k-1)}y^{(k-1)}+N^{(k)} z^{(k)}= W^{(k),T} z^{(k)}+ P^{(k)} (\pi^{(k,k-1)}y^{(k-1)}-W^{(k),T}z^{(k)}) \,.
\end{equation}
Using $W^{(k)}P^{(k)}=0$ we deduce that
\begin{equation}
|\pi^{(k,k-1)}y^{(k-1)}+N^{(k)} z^{(k)}|^2= |z^{(k)}|^2+ |P^{(k)} (\pi^{(k,k-1)}y^{(k-1)}-W^{(k),T}z^{(k)})|^2 \,.
\end{equation}
Using $P^{(k)} \pi^{(k,k-1)}=\pi^{(k,k-1)}$, $W^{(k)}P^{(k)}=0$  and $W^{(k)}\pi^{(k,k-1)}=0$
we have
\begin{equation}
|P^{(k)} (\pi^{(k,k-1)}y^{(k-1)}-W^{(k),T}z^{(k)})|^2=|y^{(k-1)}|^2+ |P^{(k)} W^{(k),T}z^{(k)}|^2\,.
\end{equation}
Summarizing we have obtained that
\begin{equation}
|\pi^{(k,k-1)}y^{(k-1)}+N^{(k)} z^{(k)}|^2= |z^{(k)}|^2+ |y^{(k-1)}|^2+ |P^{(k)} W^{(k),T}z^{(k)}|^2 \,.
\end{equation}
\cite[Lem.~14.3]{OwhScobook2018} and \cite[Lem.~14.46]{OwhScobook2018} imply that $\|P^{(k)}\|_2 \leq C$ which concludes the proof.
\end{proof}

\begin{Theorem} \label{phi^chi bounds}
It holds true that for $z\in \R^\J$,
\begin{equation}\label{eqkjhedkjdhdh}
|z|^2 \leq \|z\cdot \phi^\chi\|_{L^2(\Omega)}^2 \leq (1+C) |z|^2
\end{equation}
where $C$ depends only on $\|\L\|, \|\L^{-1}\|, \Omega, s, d$ and $\delta$. In particular,
\begin{equation}\label{eqkjhedhhjhjkjdhdh}
I_d \leq Z \leq (1+C) I_d\,.
\end{equation}
\end{Theorem}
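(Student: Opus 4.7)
Since Lemma \ref{Noise Cov} already furnishes the identity $x^T Z x = \|x\cdot\phi^{\chi}\|_{L^{2}(\Omega)}^{2}$, the matrix bound \eqref{eqkjhedhhjhjkjdhdh} is a direct consequence of the vector bound \eqref{eqkjhedkjdhdh}. The plan is therefore to focus on \eqref{eqkjhedkjdhdh} and prove it by recognising $\|z\cdot\phi^{\chi}\|_{L^{2}}^{2}$ as the squared $\ell^{2}$-norm of the terminal iterate of a scale-by-scale recursion that fits the shape of Lemma \ref{lemdjhegd3uyd} exactly.

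The first step is to transport $z\cdot\phi^{\chi}$ to the finest level. Splitting $z=(z^{(1)},\dots,z^{(q)})$ with $z^{(k)}\in\R^{\J^{(k)}}$, the definition $\phi_{i}^{(k),\chi}=\sum_{j\in\I^{(k)}} N^{(k),T}_{i,j}\phi_{j}^{(k)}$ gives $\sum_{i\in\J^{(k)}} z^{(k)}_{i}\phi_{i}^{(k),\chi}=(N^{(k)} z^{(k)})\cdot\phi^{(k)}$, and iterating \eqref{nesting relation} yields $\phi^{(k)}=\pi^{(k,q)}\phi^{(q)}$. Combining these identities,
\begin{equation}
z\cdot\phi^{\chi}=\Big(\sum_{k=1}^{q}\pi^{(q,k)}N^{(k)}z^{(k)}\Big)\cdot\phi^{(q)}.
\end{equation}
The pre-Haar functions $\phi_{i}^{(q)}=|\tau_{i}^{(q)}|^{-1/2}\,1_{\tau_{i}^{(q)}}$ have disjoint supports and are $L^{2}$-normalised, hence they are $L^{2}$-orthonormal, and therefore
\begin{equation}
\|z\cdot\phi^{\chi}\|_{L^{2}(\Omega)}^{2}=|u^{(q)}|^{2},\qquad u^{(q)}:=\sum_{k=1}^{q}\pi^{(q,k)}N^{(k)}z^{(k)}.
\end{equation}

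The second step is to recognise $u^{(q)}$ as the terminal value of the recursion $u^{(1)}:=z^{(1)}$ (using $N^{(1)}=I^{(1)}$) and
\begin{equation}
u^{(k)}:=\pi^{(k,k-1)}u^{(k-1)}+N^{(k)}z^{(k)},\qquad k=2,\dots,q,
\end{equation}
which is precisely the combination appearing in Lemma \ref{lemdjhegd3uyd}. Applying that lemma at each level with $y^{(k-1)}=u^{(k-1)}$ produces
\begin{equation}
|z^{(k)}|^{2}+|u^{(k-1)}|^{2}\leq|u^{(k)}|^{2}\leq(1+C)|z^{(k)}|^{2}+|u^{(k-1)}|^{2},
\end{equation}
and telescoping from $k=2$ up to $k=q$, starting from $|u^{(1)}|^{2}=|z^{(1)}|^{2}$, yields $|z|^{2}\leq|u^{(q)}|^{2}\leq(1+C)|z|^{2}$, which is \eqref{eqkjhedkjdhdh}.

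There is no substantive analytic obstacle here, since all the work is already packaged inside Lemma \ref{lemdjhegd3uyd}. The only subtle point worth highlighting is that the $(1+C)$ factor in that lemma multiplies only the incremental block $|z^{(k)}|^{2}$ and not the accumulated $|u^{(k-1)}|^{2}$; this asymmetry is precisely what prevents the constant from degrading to $(1+C)^{q}$ under telescoping and yields the uniform, $q$-independent bound required.
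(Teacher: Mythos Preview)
Your proof is correct and follows essentially the same approach as the paper: both reduce $\|z\cdot\phi^{\chi}\|_{L^{2}}^{2}$ to the squared $\ell^{2}$-norm of the coefficient vector at the finest level, then peel off one scale at a time via Lemma~\ref{lemdjhegd3uyd} and telescope (the paper inducts downward from $k=q$ using intermediate vectors it calls $y^{(k)}$, while you induct upward from $k=2$ with your $u^{(k)}$, but these are the same recursion). Your closing remark about the asymmetry of the $(1+C)$ factor preventing degradation to $(1+C)^{q}$ is a nice clarification the paper leaves implicit; one small caution is that your choice of notation $u^{(k)}$ collides with the paper's use of the same symbol for the projection of $u$ onto $\Psi^{(k)}$.
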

\begin{proof}
\eqref{eqkjhedhhjhjkjdhdh} follows from \eqref{eqlkjhkjhjhhuih} and \eqref{eqkjhedkjdhdh}. To prove \eqref{eqkjhedkjdhdh} write
 $z=(z^{(1)},\ldots,z^{(q)})$ with $z^{(k)}\in \R^{\J^{(k)}}$.
Observe that
\begin{equation}
\|z\cdot \phi^\chi\|_{L^2(\Omega)}^2=|\pi^{(q,1)}z^{(1)}+ \sum_{k=2}^q \pi^{(q,k)}N^{(k)} z^{(k)}|^2
\end{equation}
Therefore
\begin{equation}
\|z\cdot \phi^\chi\|_{L^2(\Omega)}^2=|\pi^{(q,q-1)}y^{(q-1)}+ N^{(q)} z^{(q)}|^2
\end{equation}
with
\begin{equation}
y^{(q-1)}=\pi^{(q-1,1)}z^{(1)}+ \sum_{k=2}^{q-1} \pi^{(q-1,k)}N^{(k)} z^{(k)}
\end{equation}
Using Lemma \ref{lemdjhegd3uyd} with $k=q$ implies that
\begin{equation}
|z^{(q)}|^2+|y^{(q-1)}|^2 \leq |\pi^{(q,q-1)}y^{(q-1)}+ N^{(q)} z^{(q)}|^2\leq (1+C) |z^{(q)}|^2+|y^{(q-1)}|^2
\end{equation}
We  conclude the proof of \eqref{eqkjhedkjdhdh} by a simple induction using Lemma \ref{lemdjhegd3uyd} iteratively. For the sake of clarity we will write the next step of this iteration.
We have
\begin{equation}
y^{(q-1)}=\pi^{(q-1,q-2)}y^{(q-2)}+ N^{(q-1)}z^{(q-1)}
\end{equation}
with
\begin{equation}
y^{(q-2)}=\pi^{(q-2,1)}z^{(1)}+ \sum_{k=2}^{q-2} \pi^{(q-2,k)}N^{(k)} z^{(k)}
\end{equation}
and Lemma \ref{lemdjhegd3uyd} with $k=q-1$ implies
\begin{equation}
|z^{(q-1)}|^2+|y^{(q-2)}|^2 \leq |y^{(q-1)}|^2\leq (1+C) |z^{(q-1)}|^2+|y^{(q-2)}|^2\,.
\end{equation}
\end{proof}

\subsection{Near minimax recovery results}

For $T\in \{0,1\}^\J$, write
\begin{equation}\label{eqjkjkjhjkjj}
v_T(\eta):=\sum_{i\in \J} T_i [\phi_i^\chi,\eta]\chi_i\,.
\end{equation}
Let $M>0$ and write  $V_M^{(q)} := \{ u \in \Psi^{(q)}\mid \|\L u\|_{L^2(\Omega)} \leq M\}$.
Define $T^\dagger \in \{0,1\}^\J$ with $T^\dagger_{i} = 1$ if and only if $i \in \J^{(k)}$ for $k \leq l^\dagger$ with
\begin{equation}
    l^\dagger = \argmin_{l\in \{0,\dots,q\}} \beta_l,
\end{equation}
for
\begin{equation}
    \beta_l = \begin{cases}
    h^{2s} M^2 & \text{if } l = 0\\
    \sigma^2 h^{-(2s+d)l} + h^{2s(l+1)}M^2 & \text{if } 1 \leq l \leq q - 1\\
    h^{-(2s+d)q} \sigma^2 & \text{if } l = q \, .
    \end{cases}
\end{equation}
We will first prove the following theorem showing the near optimality of $v_{T^\dagger}$ in the class of estimators of the form \eqref{eqjkjkjhjkjj}.

\begin{Theorem} \label{near minimax recov}
It holds true that
    \begin{equation}
        \sup_{u\in V_M^{(q)}} \E\big[\|u - v_{T^\dagger}(\eta)\|^2 \big] < C \inf_{T\in \{0,1\}^\J} \sup_{u\in V_M^{(q)}} \E\big[\|u - v_{T}(\eta)\|^2 \big]\,,
    \end{equation}
      where $C$ depends only on $h$, $s$, $\|\L\|$, $\|\L^{-1}\|$, $\Omega$, $d$, and $\delta$.
\end{Theorem}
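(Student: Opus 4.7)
The plan is to transfer the estimation problem to a weighted Gaussian sequence model in the gamblet basis and then bound upper and lower risks separately.

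First, set $x_i:=[\phi_i^\chi,u]$ and $\xi_i:=[\phi_i^\chi,\zeta]$, so that $u=\sum_i x_i\chi_i$ (using $u=u^{(q)}$) and $v_T(\eta)-u=\sum_i(T_i\xi_i-(1-T_i)x_i)\chi_i$. Since $B$ is block-diagonal with $B_{ij}=\<\chi_i,\chi_j\>$, $\E[\xi\xi^T]=\sigma^2 Z$ with $I\leq Z\leq (1+C)I$ by Theorem~\ref{phi^chi bounds}, and the blocks satisfy $ch^{-2(k-1)s}I\leq B^{(k)}\leq Ch^{-2ks}I$ by \eqref{eqcond}, expanding and taking expectations (cross terms vanishing) gives
\[
\E\|u-v_T(\eta)\|^2 \;\asymp\; \sum_{k=1}^{q} h^{-2ks}\Bigl( \sigma^2 N_k + \sum_{i\in\J^{(k)},\,T_i=0} x_i^2 \Bigr),
\]
where $N_k:=\#\{i\in\J^{(k)}:T_i=1\}$ and $\asymp$ hides constants depending only on the theorem's parameters.

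For the upper bound, with $T=T^\dagger$ only the variance survives at kept levels $k\leq l^\dagger$; using $|\J^{(k)}|\leq Ch^{-dk}$ from Example~\ref{exprehaar} and geometric summation yields $\sum_{k\leq l^\dagger}\sigma^2 h^{-(2s+d)k}\leq C\sigma^2 h^{-(2s+d)l^\dagger}$, while the bias at discarded levels telescopes to $\sum_{k>l^\dagger}\|u^{(k)}-u^{(k-1)}\|^2=\|u-u^{(l^\dagger)}\|^2\leq Ch^{2sl^\dagger}M^2$ by item~1 of Theorem~\ref{themoptimaldecomposition}. Hence $\sup_{u\in V_M^{(q)}} \E\|u-v_{T^\dagger}(\eta)\|^2\leq C\beta_{l^\dagger}$. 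For the matching lower bound I will show that every $T$ satisfies $\sup_u \E\|u-v_T\|^2\geq c\beta_{l^\dagger}$. Fix a small constant $\rho>0$. If $N_k\geq\rho|\J^{(k)}|$ for every $k\leq l^\dagger$, then the variance term alone forces the risk to be at least $c\sigma^2\sum_{k\leq l^\dagger}h^{-(2s+d)k}\geq c'\sigma^2 h^{-(2s+d)l^\dagger}$. Otherwise some level $k^\star\leq l^\dagger$ has at least $(1-\rho)|\J^{(k^\star)}|$ discarded indices, and I build an adversarial $u\in V_M^{(q)}$ concentrated at level $k^\star$ on this discarded set to force a bias contribution of order at least $ch^{2sk^\star}M^2\geq c'h^{2sl^\dagger}M^2$. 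Combining the two cases gives $\sup_u \E\|u-v_T\|^2\geq c\beta_{l^\dagger}$, proving the theorem.

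The main obstacle is the adversarial construction in the second case, since the constraint $\|\L u\|_{L^2(\Omega)}\leq M$ does not decouple across gamblet coefficients. A natural route is by duality: choose $f\in L^2(\Omega)$ of norm $M$ whose $\Phi^{(k^\star)}$-component lives on the cells $\tau_i^{(k^\star)}$ with $i$ in the discarded set $S$, let $u$ be the projection of $\L^{-1}f$ onto $\Psi^{(q)}$, and use the biorthogonality $[\phi_i^\chi,\chi_j]=\delta_{ij}$ together with the locality of $\chi_i^{(k^\star)}$ (item~3 of Theorem~\ref{themoptimaldecomposition}) and the spectral bounds \eqref{eqcond} to lower-bound $\sum_{i\in S}x_i^2$ by the required $\gtrsim h^{4sk^\star}M^2$. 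The underlying scaling to check is that a $u\in\Psi^{(q)}$ concentrated at scale $h^{k^\star}$ with $\|\L u\|_{L^2}\leq M$ admits $\|u\|^2\sim h^{2sk^\star}M^2$, which matches the predicted bias exactly. The boundary cases $l^\dagger\in\{0,q\}$ require separate but analogous treatment in which only one of the two terms in $\beta_{l^\dagger}$ is present.
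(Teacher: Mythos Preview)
Your reduction to a weighted sequence model and your upper bound are essentially the paper's argument; using item~1 of Theorem~\ref{themoptimaldecomposition} for the bias is a clean variant of what the paper does via the ellipsoid inclusion $V_M^{(q)}\subset\bar V_M^{(q)}$.

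The gap is in the lower bound, precisely where you flag the ``main obstacle.'' You assert that the constraint $\|\L u\|_{L^2}\leq M$ does not decouple across gamblet coefficients and therefore reach for a duality/locality construction spread over the many discarded indices at level $k^\star$. But the constraint \emph{does} decouple up to constants. From $\L\psi_j^{(k)}=\sum_l A^{(k)}_{j,l}\phi_l^{(k)}$ one checks $\L\chi_i^{(k)}=\sum_j(B^{(k)}\phi^{(k),\chi})_{i}$, hence $\L u=(Bx)\cdot\phi^\chi$ with $x=[\phi^\chi,u]$; Theorem~\ref{phi^chi bounds} then gives $\|\L u\|_{L^2}^2\asymp|Bx|_2^2$, and the block bounds~\eqref{eqcond} yield the ellipsoid sandwich
\[
\Bigl\{u\in\Psi^{(q)}:\ \sum_k h^{-4ks}\,|[\phi^{(k),\chi},u]|^2\leq M^2/C\Bigr\}\ \subset\ V_M^{(q)}.
\]
With this in hand a \emph{single} discarded index suffices: if $T_i=0$ for some $i\in\J^{(l+1)}$, set $[\phi^\chi_i,u]=h^{2(l+1)s}M/\sqrt{C}$ and all other coefficients to zero; this $u$ lies in $V_M^{(q)}$ and produces bias $\geq C^{-1}h^{2s(l+1)}M^2$. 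Your proposed route through $f\in L^2$, the $\Psi^{(q)}$-projection of $\L^{-1}f$, and the exponential decay of $\psi_i$ is unnecessary and, as sketched, incomplete (the projection need not lie in $V_M^{(q)}$ when $f\notin\Phi^{(q)}$, and the lower bound on $\sum_{i\in S}x_i^2$ is not actually derived).

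Once you have the one-index adversary, the paper's organization of the lower bound is also simpler than your $\rho$-threshold case split: partition $\{0,1\}^\J$ into the sets $F^{(l)}$ indexed by the largest $l$ for which $T$ keeps every index at levels $\leq l$. For $T\in F^{(l)}$ you get full variance $\gtrsim\sigma^2 h^{-(2s+d)l}$ from levels $1,\dots,l$ \emph{and} bias $\gtrsim h^{2s(l+1)}M^2$ from one discarded index at level $l+1$, so $\sup_u\E\|u-v_T\|^2\geq C^{-1}\beta_l$ directly; minimizing over $l$ finishes without any separate balancing argument at $l^\dagger$.
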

\begin{proof}
To simplify notations, we will write $C$ for any constant depending only  on $h$, $s$, $\|\L\|$, $\|\L^{-1}\|$, $\Omega$, $d$, and $\delta$ (therefore $C h^{-2s}(1+C)$ will still be written $C$).
Abusing notations and writing $T$ for the $\J\times \J$ diagonal matrix with entries $T_{i,i}=T_i$, we have for $u\in \Psi^{(q)}$,
\begin{equation} \label{de-noising error L norm}
    \|u - v_T(\eta)\|^2 = ((I-T) [\phi^\chi, u] - T [\phi^\chi, \zeta])^T B ((I-T) [\phi^\chi, u] - T [\phi^\chi, \zeta]).
\end{equation}
Therefore the bounds \eqref{eqcond} on the (diagonal) blocks of $B$ imply that

\begin{equation}\label{minimax filter1}
    \inf_T \sup_u \E\big[\|u - v_{T}(\eta)\|^2 \big] \geq C^{-1}  \inf_T \sup_u  \sum_{k=1}^q h^{-2sk} \sum_{i\in \J^{(k)}} \bigg( (1-T_i)[\phi_i^{(k),\chi}, u]^2 + T_i \E \big[ [\phi_i^{(k),\chi}, \zeta]^2 \big] \bigg)
\end{equation}
Theorem \ref{phi^chi bounds} implies
 \begin{equation}\label{eqjhkjhkjhgyy}
 \sigma^2 \leq \E \big[ [\phi_i^{(k),\chi}, \zeta]^2 \big] \leq (1+C)\sigma^2\,,
 \end{equation} and
\begin{equation} \label{minimax filter2}
 \inf_T \sup_u \E\big[\|u - v_{T}(\eta)\|^2 \big] \geq C^{-1}  \inf_T \sup_u \sum_k h^{-2sk} \sum_{i\in \J^{(k)}} \bigg( (1-T_i) [\phi_i^{(k),\chi}, u]^2 + T_i \sigma^2 \bigg)\,.
\end{equation}
Next, if $|\cdot|_2^2$ is the $l^2$ norm on $\R^\J$, $\L u = [\phi^\chi, u] \cdot B \phi^\chi$ and Theorem \ref{phi^chi bounds} imply
\begin{equation}
|[\phi^\chi, u]^T B|_2^2 \leq  \|\L u\|_{L^2}^2 \leq (1+C) |[\phi^\chi, u]^T B|_2^2\,.
\end{equation}
We deduce that $\ubar{V}_M^{(q)} \subset V_M^{(q)}\subset \bar{V}_M^{(q)}$ with
\begin{equation}
   \ubar{V}_M^{(q)} :=\{u\in \Psi^{(q)} \mid \sum_{k=1}^q h^{-4ks}|[\phi^{(k),\chi}, u]|^2 <  \frac{M^2}{C} \} \,
\end{equation}
and
\begin{equation}\label{eqkjkjjhgjgj}
\bar{V}_M^{(q)} :=\{u\in \Psi^{(q)} \mid \sum_{k=1}^q h^{-4ks}|[\phi^{(k),\chi}, u]|^2 < C M^2 \}\,.
\end{equation}
We deduce that
\begin{equation} \label{minimax filter3}
  \inf_T \sup_{u\in V_M^{(q)}} \E\big[\|u - v_{T}(\eta)\|^2 \big]  \geq C^{-1}\inf_T \sup_{u \in \ubar{V}_M^{(q)}}  \sum_{k=1}^q h^{-2sk} \sum_{i\in \J^{(k)}} \bigg( (1-T_i)[\phi_i^{(k),\chi}, u]^2 + T_i\sigma^2 \bigg)\,.
\end{equation}
Let us now show that for some $0 \leq l \leq q$, the minimizer $T^{(l),*}$ of the right-hand side of \eqref{minimax filter3} satisfies $T^{(l),*}_i = 1$ for all $i \in \J^{(k)}$ with $k \leq l$ and $T^{(l),*}_i = 0$ otherwise.  For $l\in \{0,\ldots,q-1\}$ write
\begin{equation}
    F^{(l)} := \{T \in \{0,1\}^\J \mid T_i = 1 \text{ for } i \in \J^{(k)} \text{ with } k \leq l, \text{ and } \exists i \in \J^{(l+1)} \text{ s.t. } T_i=0\}\,,
\end{equation}
and write $F^{(q)} := \{ 1\}^\J$. Let us now identify the minimizer in $T$ of
\begin{equation}\label{sup risk}
    \sup_{u \in V_M^{(q)}} \sum_{k=1}^q h^{-2sk}\sum_{i\in \J^{(k)}} \bigg( (1-T_i) [\phi_i^{(k),\chi}, u]^2 + T_i \sigma^2 \bigg).
\end{equation}
Notice that the $u$ which maximizes \eqref{sup risk} can be found by maximizing
\begin{equation} \label{min prob}
\sum_{k=1}^q h^{-2sk}\sum_{i\in \J^{(k)}} (1-T_i) [\phi_i^{(k),\chi}, u]^2
\end{equation}
 over $u\in \Psi^{(q)}$ such that
$\sum_{k=1}^q h^{-4ks}\sum_{i\in \J^{(k)}}  [\phi^{(k),\chi}_i, u]^2 <  \frac{M^2}{C}$. Writing $y_i:=h^{-2ks}[\phi^{(k),\chi}_i, u]$, this is equivalent to maximizing $\sum_{k=1}^q h^{2sk}\sum_{i\in \J^{(k)}} (1-T_i) y_i^2$ over $y\in \R^\J$ such that
$|y|^2\leq \frac{M^2}{C}$.
For $T\in F^{(l)}$ with $1 \leq l \leq q - 1$ the maximum is achieved by taking $y_i=M/\sqrt{C}$ for a single $i\in \J^{(l+1)}$ such that
$T_i=0$ and is equal to $h^{2s(l+1)}M^2/C$. Therefore, using $C \sum_{k=1}^l h^{-2sk} |\J^{(k)}|\geq h^{-(2s+d)l}$ we deduce that
\begin{equation} \label{minimax 4}
    \inf_{T\in F^{(l)}} \sup_{u \in \ubar{V}_M^{(q)}} \sum_{k=1}^q h^{-2sk}\sum_{i\in \J^{(k)}} \bigg( (1-T_i) [\phi_i^{(k),\chi}, u]^2 + T_i \sigma^2 \bigg) \geq \frac{h^{2s(l+1)}M^2 + h^{-(2s + d)l} \sigma^2}{C}.
\end{equation}
For notational convenience, define the sequence $\ubar{\beta}_l$ to be the left-hand side of \eqref{minimax 4} for $0 \leq l \leq q$.  Further, if $l = 0$, then $\ubar{\beta}_0 \geq h^{2s}M^2/C$ with optimum filter $T = \{0\}^\J$ and if $l = q$,  by definition, $T = \{1\}^\J$, hence $\ubar{\beta}_q \geq h^{(2s+d)q}\sigma^2 / C$.
Therefore, \eqref{minimax filter3} implies
\begin{equation}
     \inf_T \sup_{u\in V_M^{(q)}} \E\big[\|u - v_{T}(\eta)\|^2 \big] \geq \min_{l} \ubar{\beta}_l \, . 
\end{equation}
Similarly using \eqref{de-noising error L norm}, \eqref{eqjhkjhkjhgyy} and the bounds \eqref{eqcond} on the (diagonal) blocks of $B$ and \eqref{eqkjkjjhgjgj} imply that
\begin{equation} \label{minideejur3}
   \sup_{u\in V_M^{(q)}} \E\big[\|u - v_{T}(\eta)\|^2 \big]  \leq C  \sup_{u \in \bar{V}_M^{(q)}}  \sum_{k=1}^q h^{-2sk} \sum_{i\in \J^{(k)}} \bigg( (1-T_i)[\phi_i^{(k),\chi}, u]^2 + T_i\sigma^2 \bigg)\,.
\end{equation}
Let $\bar{\beta}_l$ be the value of the right-hand side of \eqref{minideejur3} for $T=T^{(l),*}$.  By the same arguments as those following \eqref{min prob}, we obtain that, $\bar{\beta}_l \leq C(h^{2s(l+1)}M^2 + h^{-(2s + d)l}\sigma^2)$ for $1 \leq l \leq q - 1$, $\bar{\beta}_0 \leq C h^{2s} M^2$, and $\bar{\beta}_q \leq C h^{(2s+d)q}\sigma^2$.  Hence, for some fixed constant, $\bar{\beta}_l \leq C \ubar{\beta}_l$ for all $l$.  Therefore taking $T=T^\dagger$ and bounding the right-hand side of \eqref{minideejur3} as in \eqref{sup risk} leads to
\begin{equation} \label{minideejur3last}
   \sup_{u\in V_M^{(q)}} \E\big[\|u - v_{T^\dagger}(\eta)\|^2 \big] \leq \min_l \bar{\beta}_l \leq C \min_l \ubar{\beta}_l \leq C \inf_T \sup_{u\in V_M^{(q)}} \E\big[\|u - v_{T}(\eta)\|^2 \big] \,,
\end{equation}
which concludes the proof.
\end{proof}

The following theorem shows that gamblet filtering yields a recovery that is near optimal (in energy norm and in the class of all estimators) up to a constant.
\begin{Theorem}
    Suppose that $T \in \{0, 1\}^\J$.  Then the following holds for a constant $C$ dependent on $h$, $s$, $\|\L\|$, $\|\L^{-1}\|$, $\Omega$, $d$, and $\delta$:
    \begin{equation}
        \inf_{T \in \{0, 1\}^\J} \sup_{u \in V_M^{(q)}} \E \big[\|u - v_T(\eta)\|^2\big] \leq C \inf_{v(\eta)} \sup_{u \in V_M^{(q)}} \E \big[\|u - v(\eta)\|^2\big],
    \end{equation}
     where the infimum on the right-hand side is taken over all measurable functions $v: \Psi^{(q)} \rightarrow \Psi^{(q)}$.
\end{Theorem}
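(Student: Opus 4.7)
The plan is to produce a minimax lower bound
\[
\inf_{v}\sup_{u\in V_M^{(q)}} \E\bigl[\|u - v(\eta)\|^2\bigr] \;\geq\; C^{-1} \min_{l\in\{0,\dots,q\}} \beta_l,
\]
which, combined with the proof of Theorem \ref{near minimax recov} (whose intermediate $\bar\beta_l,\ubar\beta_l$ show $\inf_T \sup_u \E[\cdots] \leq C \min_l \beta_l$), yields the desired inequality. Since the filter estimators $v_T$ with $T\in\{0,1\}^{\J}$ are a particular measurable class, the resulting sandwich proves the theorem.

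Step 1 (reduction to coordinates). Using Theorem \ref{phi^chi bounds} I would change variables to the gamblet coefficients $c = [\phi^\chi,u]$ and $z = [\phi^\chi,\zeta]$. Then $z$ is centered Gaussian with $\sigma^2 I \leq \Cov(z) \leq (1+C)\sigma^2 I$, the energy loss equals $(c-\hat c)^T B (c-\hat c)$, the block-diagonal $B$ satisfies $C^{-1}h^{-2sk}J^{(k)} \leq B^{(k)} \leq Ch^{-2sk}J^{(k)}$ by Theorem \ref{themoptimaldecomposition}, and $V_M^{(q)}$ is sandwiched between two ellipsoids of the form $\bigl\{\sum_k h^{-4sk} \sum_{i\in\J^{(k)}} c_i^2 \leq (\cdot)\, M^2\bigr\}$ (cf.\ the argument around \eqref{eqkjkjjhgjgj}). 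Up to absorbable constants we are thus in a Gaussian sequence model on an ellipsoid with a weighted $l^2$ loss.

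Step 2 (level-by-level lower bound). For each $l\in\{1,\dots,q\}$ I would restrict the supremum to signals $u$ supported on $\W^{(l)}$, so that $c$ has $|\J^{(l)}| \asymp h^{-dl}$ coordinates subject to $|c^{(l)}|^2 \leq C^{-1} M^2 h^{4sl}$, with noise variance $\asymp \sigma^2$ per coordinate and loss $\asymp h^{-2sl}|c^{(l)}-\hat c^{(l)}|^2$. The classical Gaussian-prior (Pinsker-type) Bayes lower bound for estimation on an $l^2$-ball with i.i.d.\ Gaussian noise, applied with prior variance $\tau_l^2 = M^2 h^{4sl}/|\J^{(l)}|$, then yields
\[
\inf_v \sup_{u \in V_M^{(q)}} \E\bigl[\|u - v(\eta)\|^2\bigr] \;\geq\; C^{-1} h^{-2sl} \min\!\bigl(|\J^{(l)}|\sigma^2,\; M^2 h^{4sl}\bigr) \;\asymp\; \min\!\bigl(\sigma^2 h^{-(2s+d)l},\; M^2 h^{2sl}\bigr).
\]
For $l=0$ a two-point Le Cam argument between $u=0$ and a single saturated coefficient at level $1$ gives $\gtrsim M^2 h^{2s}$, matching $\beta_0$; the case $l=q$ is the noise-only regime and yields $\gtrsim h^{-(2s+d)q}\sigma^2$.

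Step 3 (optimizing over $l$ and conclusion). Taking the maximum over $l$ of the level-$l$ lower bounds, I would compare with $\beta_l = \sigma^2 h^{-(2s+d)l} + h^{2s(l+1)}M^2$: at the minimizer $l^\dagger$ the two terms defining $\beta_{l^\dagger}$ are balanced up to a factor of $h^{2s}$, so $\min(\sigma^2 h^{-(2s+d)l^\dagger}, M^2 h^{2sl^\dagger}) \geq C^{-1}\beta_{l^\dagger}$. Consequently $\max_l(\text{level-}l\text{ lower bound}) \geq C^{-1}\min_l \beta_l$, giving the full minimax lower bound and hence the theorem via Theorem \ref{near minimax recov}. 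The main obstacle is making the Gaussian-prior Bayes step rigorous under the hard ellipsoid constraint: the natural centered Gaussian prior concentrates on the ellipsoid only approximately, so one either restricts/truncates the prior (losing a harmless constant) or invokes Pinsker's lemma for Gaussian sequence models on ellipsoids directly. The remainder is bookkeeping: verifying the boundary cases $l=0,q$ and absorbing the constants from Theorems \ref{phi^chi bounds} and \ref{themoptimaldecomposition} into $C$.
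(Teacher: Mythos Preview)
Your proposal is essentially correct and workable, but it takes a different route from the paper. The paper never computes a direct Bayes/Pinsker lower bound of the form $\inf_v\sup_u \geq C^{-1}\min_l\beta_l$. Instead it chains together three black-box comparisons: (i) a lemma from \cite{Minimaxlinear2017} stating that on an axis-aligned ellipsoid with \emph{independent} Gaussian noise, binary coordinate filters are within a factor $4.44$ of the full minimax risk; (ii) a covariance-domination step, passing through \emph{affine} estimators (whose risk is monotone in the noise covariance) to replace the artificial independent noise $z'$ by the true correlated noise $[\phi^\chi,\zeta]$; and (iii) Donoho's result \cite{donoho1994statistical} that on a closed bounded convex set affine minimax is within $1.25$ of general minimax. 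The constants $4.44$ and $1.25$ and the scaling between $\bar V_M^{(q)}$ and $\ubar{\ubar V}_M^{(q)}$ are absorbed into $C$. Your approach, by contrast, is more self-contained: you bound the general minimax risk from below by restricting the prior to a single level $\W^{(l)}$, apply a Gaussian-prior Bayes bound on an $l^2$-ball, maximize over $l$, and match against the upper bound already extracted in the proof of Theorem \ref{near minimax recov}. This avoids importing the two cited lemmas at the cost of reproving a Pinsker-type inequality.

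One point to make explicit in your write-up: in Step~2 you invoke ``i.i.d.\ Gaussian noise,'' but $[\phi^\chi,\zeta]$ has covariance $\sigma^2 Z$ with $Z$ merely satisfying $I\leq Z\leq (1+C)I$; it is neither diagonal nor block-diagonal. When you restrict the signal to $\W^{(l)}$, the observer still sees correlated noise across all levels, and those other coordinates carry information about $z^{(l)}$. The clean fix is a genie argument: since $Z\geq I$, write $[\phi^\chi,\zeta] = z_1+z_2$ with $z_1\sim\cN(0,\sigma^2 I)$ independent of $z_2\sim\cN(0,\sigma^2(Z-I))$, hand the observer $z_2$ as side information (which can only lower the minimax risk), and reduce to the i.i.d.\ model $y=\theta+z_1$. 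After that, your single-level Bayes bound and the balancing in Step~3 go through. This is routine, but as written your Step~2 skips it.
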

\begin{proof}
    Recalling that
    \begin{equation}
        \E \big[\|u - v_T(\eta)\|^2 \big] = \E \big[(T [\phi^\chi, \zeta] - (I-T) [\phi^\chi, u])^T B (T [\phi^\chi, \zeta] - (I-T) [\phi^\chi, u])\big],
    \end{equation}
    the bounds \eqref{eqcond} on the (diagonal) blocks of $B$ imply that
    \begin{equation}\label{filter error}
        \leq C \E \Bigg[ \sum_k \sum_{i\in \J^{(k)}} \bigg( T_i h^{-2sk} (1+C) + (1-T_i) (h^{-sk} [\phi_i^{(k), \chi}, u])^2 \bigg) \Bigg].
    \end{equation}
     We will now introduce preliminary results associated with  the problem of recovering $\theta \in \R^\J$ from the observation of $y = \theta + z$ where $z$ is a centered Gaussian vector with independent entries
     $z_i\sim \N(0, \sigma_i^2)$.  Estimators in this $\R^\J$ space will be written $\hat{\theta}$ (we use $v$ when the problem is formulated in $\Psi^{(q)}$).  For  $T \in \{0, 1\}^\J$ define the estimator  $\hat{\theta}_T$ via
     \begin{equation}\label{eqjkejdheihdudhius}
     (\hat{\theta}_T(y))_i = T_i y_i\,.
      \end{equation}
      Then, writing $|\cdot|_{2}$ for the $l^2$ (Euclidean) norm in $\R^\J$, the expected error of the recovery is:
    \begin{equation}
        \E \big[ |\theta - \hat{\theta}_T(y)|_2^2 \big] = \E \bigg[ \sum_i \Big( T_i z_i^2 + (1-T_i) \theta_i^2 \Big) \bigg].
    \end{equation}
    Let $\bar{\theta}(u)$ be the $\R^\J$ vector defined by  $\bar{\theta}_i(u) = h^{-sk} [\phi_i^{(k), \chi}, u]$ for $i\in \R^\J$.
    Let $\bar{V}_M^{(q)} \supset V_M^{(q)}$ be  defined as in \eqref{eqkjkjjhgjgj} and write $\bar{\theta}(\bar{V}_M^{(q)})$ for the image
    of $\bar{V}_M^{(q)}$ under the map $\bar{\theta}$.
    Consider the problem of recovering $\theta\in \bar{\theta}(\bar{V}_M^{(q)})$ from the observations $y'_i = \theta_i+ z'_i$ where the $z'_i$ are independent centered Gaussian random variables with variance  $\Var(z'_i) = (1+C) h^{-2sk}$ for $i \in \J^{(k)}$.  We will need the following lemma which is directly implied by \cite[Lem.~2]{Minimaxlinear2017}. 
    \begin{Lemma}\label{lemjdhejdhgeuydgse}
    Let $\J$ be an index set and for $\theta\in \R^\J$ let $y$ be the noisy observation of $\theta$ defined by $y=\theta+z$  where $z$ is a Gaussian vector with independent entries
        $z_i\sim \N(0, \sigma_i^2)$. For $a\in \R^\J\setminus\{0\}$ and $M>0$ write
        \begin{equation}
            \Theta_\bold{a}(M) := \Big\{ \theta \in \R^\J \mid \sum_{j \in \J} a_j^2 \theta_j^2 \leq M^2 \Big\},.
        \end{equation}
        Then for $\hat{\theta}_T$ defined as in \eqref{eqjkejdheihdudhius}, it holds  true that
        \begin{equation}\label{eqlkdhjkdjehdk}
            \inf_{T \in \{0,1\}^\J} \sup_{\theta \in \Theta_\bold{a}(M)} \E \big[|\hat{\theta}_T(y) - \theta|_2^2\big] \leq 4.44 \inf_{\hat{\theta}(y)} \sup_{\theta \in \Theta_\bold{a}(M)} \E \big[|\hat{\theta}(y) - \theta|_2^2\big]
        \end{equation}
        where the infimum in the right hand side of \eqref{eqlkdhjkdjehdk} is taken over all functions  $\hat{\theta}$ of $y$.
    \end{Lemma}
    Lemma \ref{lemjdhejdhgeuydgse} implies that
    \begin{equation}
        \inf_T \sup_{V_M^{(q)}} \eqref{filter error} \leq \inf_T \sup_{\bar{V}_M^{(q)}} \eqref{filter error} \leq 4.44 \inf_{\hat{\theta}(y')} \sup_{\theta \in \bar{\theta}(\bar{V}_M^{(q)})} \E \big[|\hat{\theta}(y') - \theta|_2^2\big]\,.
    \end{equation}
    Let $C_0>1$  be a constant larger than $(1+C) h^{-s}$ (with $C$ being the constant in \eqref{eqcond}) and also larger than $\sqrt{1 + C}$ (with $C$ being the constant in \eqref{eqkjhedkjdhdh}).  Then, it is true that $\ubar{\ubar{V}}_M^{(q)} \subset \ubar{V}_M^{(q)} \subset V_M^{(q)} \subset \bar{V}^{(q)}_M$ for
    \begin{equation}
    \ubar{\ubar{V}}_M^{(q)} := \{u\in \Psi^{(q)} \mid C_0 u \in \bar{V}_M^{(q)} \} \,.
    \end{equation}
    Further, for $\theta\in \R^\J$ let $y''=\theta+z''$ be the noisy observation of $\theta$ where $z$ is a centred Gaussian vector with independent entries of variance $\Var(z_i'') = C_0^{-2} \Var(z_i')$. We deduce that
    \begin{equation}\label{prelemma eq}
        \inf_{\hat{\theta}(y')} \sup_{\bar{\theta}(\bar{V}_M^{(q)})} \E \big[|\hat{\theta}(y') - \theta|_2^2\big] = C_0^2 \inf_{\hat{\theta}(y')} \sup_{\bar{\theta}(\bar{V}_M^{(q)})} \E \big[|\hat{\theta}(C_0^{-1} y') - C_0^{-1} \theta|_2^2\big] = C_0^2 \inf_{\hat{\theta}(y'')} \sup_{\bar{\theta}(\ubar{\ubar{V}}_M^{(q)})} \E \big[ |\hat{\theta}(y'') - \theta|_2^2\big] \, .
    \end{equation}
    Define the set of affine recoveries to be of the form $\hat{\theta}_A(y) = Ty+y_0$ with $T$ linear and $y_0 \in \R^\J$.  Since this is a subset of all recoveries $\hat{\theta}$ (and replacing $C_0^2$ with $C$), it holds true that
    \begin{equation}\label{lin y''}
        \eqref{prelemma eq} \leq C \inf_{\hat{\theta}_A(y'')} \sup_{\bar{\theta}(\ubar{\ubar{V}}_M^{(q)})} \E \big[ |\hat{\theta}(y'') - \theta|_2^2\big].
    \end{equation}
    If we define $\theta_i := h^{-sk} [\phi_i^{(k),\chi}, u]$, $z_i = h^{-sk} [\phi_i^{(k), \chi}, \zeta]$, and $y_i = \theta_i + z_i$, it is true that $\Cov(z) > \Cov(z'')$.   Since, in the space of affine recoveries, the recovery error increases with the covariance matrix,  and since $\bar{\theta}(\ubar{\ubar{V}}_M^{(q)}) \subset \bar{\theta}(V_M^{(q)})$, we have
    \begin{equation}\label{aff y}
        \eqref{lin y''} \leq C \inf_{\hat{\theta}_A(y)} \sup_{\bar{\theta}(\ubar{\ubar{V}}_M^{(q)})} \E \big[ |\hat{\theta}(y) - \theta|_2^2\big] \leq C \inf_{\hat{\theta}_A(y)} \sup_{\bar{\theta}(V_M^{(q)})} \E \big[|\hat{\theta}(y) - \theta|_2^2\big]\,.
    \end{equation}
    The following lemma is a direct application of \cite[Cor.~1]{donoho1994statistical} (with $K = I$ and $L = I$, see also  the remarks in \cite[sec.~13.1]{donoho1994statistical}).  It will be used to compare affine and general minimax recovery errors.
    \begin{Lemma}
    Let $\J$ be an index set and $z$ be a centered Gaussian vector of $\R^\J$ with $\Cov(z) = Z$ where $Z$ is invertible.
    Let $V$ be a closed, bounded, and convex subset and $\R^\J$ and for $\theta \in V$ let $y=\theta+z$ be the noisy observation of $\theta$. It holds true that
        \begin{equation}
            \inf_{\hat{\theta}_A(y)} \sup_{\theta\in V} \E \big[|\hat{\theta}(y) - \theta|_2^2\big] \leq 1.25 \inf_{\hat{\theta}(y)} \sup_{\theta\in V} \E \big[|\hat{\theta}(y) - \theta|_2^2\big]\,.
        \end{equation}
    \end{Lemma}
    Since ellipsoids are closed, bounded, and convex, we can apply the lemma and bounds on the (diagonal) blocks of $B$ in \eqref{eqcond} to obtain the desired result,
    \begin{equation}
        \eqref{aff y} \leq C \inf_{\hat{\theta}(y)} \sup_{\bar{\theta}(V_M^{(q)})} \E \big[|\hat{\theta}(y) - \theta|_2^2\big] \leq C h^{-2s} \inf_{v(\eta)} \sup_{u \in V_M^{(q)}} \E \big[\|v(\eta) - u\|^2\big]\,.
    \end{equation}
\end{proof}
\subsection{Proof of Smooth Recovery}\label{smooth recov}
The final statement of Theorem \ref{main thm} will be proven in this section.  First, the following lemma will be proven.
\begin{Lemma} \label{smooth recov lemma}
    Let $T^\dagger$ be as in Theorem \ref{main thm} such that $l^\dagger \neq 0$, then it holds true that
    \begin{equation}
        \|v_{T^\dagger}(\eta)\| \leq \|u\| + C \sqrt{z} \sigma^\frac{2s}{4s+d} M^\frac{2s+d}{4s+d}
    \end{equation}
    with probability at least $1 - (z e^{1-z})^{|\J^{(l^\dagger)}|/2}$ for $C$ a constant dependent on $h$, $s$, $\|\L\|$, $\|\L^{-1}\|$, $\Omega$, $d$, and $\delta$.
\end{Lemma}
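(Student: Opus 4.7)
The plan is to exploit the linearity of $v_{T^\dagger}$ and split the recovered signal into a deterministic signal contribution and a stochastic noise contribution. By linearity, $v_{T^\dagger}(\eta) = v_{T^\dagger}(u) + v_{T^\dagger}(\zeta)$, and because $v_{T^\dagger}$ applied to any $w \in \Psi^{(q)}$ equals the $\langle \cdot, \cdot\rangle$-orthogonal projection of $w$ onto $\V^{(l^\dagger)}$ (this is the content of the formulas around \eqref{eqkjhekjdhdjnd}), Pythagoras gives $\|v_{T^\dagger}(u)\| = \|u^{(l^\dagger)}\| \leq \|u\|$. By the triangle inequality it then suffices to establish a high-probability bound
\[
\|v_{T^\dagger}(\zeta)\| \leq C\sqrt{z}\,\sigma^{\frac{2s}{4s+d}} M^{\frac{2s+d}{4s+d}}.
\]

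For the noise part, set $Y := [\phi^{(1:l^\dagger),\chi}, \zeta] \in \R^{n}$ with $n := |\J^{(\leq l^\dagger)}|$. The cross-level $\langle\cdot,\cdot\rangle$-orthogonality of the $\chi^{(k)}$'s together with $B_{i,j} = \langle \chi_i, \chi_j\rangle$ gives the clean identity
\[
\|v_{T^\dagger}(\zeta)\|^2 = \sum_{k=1}^{l^\dagger} Y^{(k),T} B^{(k)} Y^{(k)} = Y^T B_{T^\dagger} Y.
\]
By Lemma \ref{Noise Cov}, $Y \sim \cN(0, \sigma^2 Z_{T^\dagger})$, and Theorem \ref{phi^chi bounds} yields $Z_{T^\dagger} \leq (1+C) I_n$. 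Writing $Y = \sigma Z_{T^\dagger}^{1/2}\xi$ with $\xi \sim \cN(0, I_n)$ and combining with the block-diagonal operator-norm bound $\|B^{(k)}\|_{op} \leq C h^{-2sk} \leq C h^{-2s l^\dagger}$ from \eqref{eqcond},
\[
\|v_{T^\dagger}(\zeta)\|^2 \leq \sigma^2\,\bigl\|Z_{T^\dagger}^{1/2} B_{T^\dagger} Z_{T^\dagger}^{1/2}\bigr\|_{op}\,|\xi|^2 \leq C\sigma^2 h^{-2s l^\dagger}|\xi|^2.
\]

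Now apply the standard chi-squared tail inequality $\P\bigl(|\xi|^2 \geq z n\bigr) \leq (z e^{1-z})^{n/2}$ valid for $z \geq 1$. Combining with the pre-Haar cardinality estimate $n = |\J^{(\leq l^\dagger)}| \leq C h^{-d l^\dagger}$ from Example \ref{exprehaar}, on the complementary event
\[
\|v_{T^\dagger}(\zeta)\|^2 \leq C z \sigma^2 h^{-(2s+d) l^\dagger}.
\]
Since $|\J^{(\leq l^\dagger)}| \geq |\J^{(l^\dagger)}|$ and $ze^{1-z} < 1$ for $z > 1$, the failure probability $(ze^{1-z})^{n/2}$ is dominated by $(ze^{1-z})^{|\J^{(l^\dagger)}|/2}$, matching the lemma's probability. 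It remains to identify $\sigma h^{-(2s+d)l^\dagger/2}$ with the claimed rate using the optimality of $l^\dagger$: since $l^\dagger \neq 0$ and $l^\dagger$ minimizes $\beta_l$, balancing the two terms of $\beta_l$ and comparing with the continuous minimizer $h^{l^*} \asymp (\sigma/M)^{2/(4s+d)}$ (absorbing $h$-dependent prefactors into $C$) yields $\sigma^2 h^{-(2s+d)l^\dagger} \leq C \sigma^{\frac{4s}{4s+d}} M^{\frac{4s+2d}{4s+d}}$, whose square root is exactly the target rate.

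The main obstacle is the last step: translating the discrete minimum $\beta_{l^\dagger}$ into an explicit $(\sigma, M)$ rate. The interior case $1 \leq l^\dagger \leq q-1$ is a direct balance argument, but the boundary case $l^\dagger = q$ requires using $\beta_q \leq \beta_{q-1}$ to re-introduce a bias-type upper bound, after which the same arithmetic applies. The chi-squared concentration itself is standard once the non-identity covariance $\sigma^2 Z_{T^\dagger}$ has been controlled uniformly by Theorem \ref{phi^chi bounds} — this uniform conditioning of $Z$ (independent of the multi-resolution depth) is the only gamblet-specific ingredient entering the noise analysis.
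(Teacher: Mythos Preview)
Your proposal is correct and follows essentially the same route as the paper's proof: split $v_{T^\dagger}(\eta)=u^{(l^\dagger)}+\zeta^{(l^\dagger)}$, bound the signal part by $\|u\|$ via the projection property, control $\|\zeta^{(l^\dagger)}\|^2$ by $C\sigma^2 h^{-2sl^\dagger}$ times a chi-squared variable using the eigenvalue bounds \eqref{eqcond} and \eqref{eqkjhedhhjhjkjdhdh}, apply the Chernoff tail, and then convert $h^{-(2s+d)l^\dagger}$ into the $(\sigma,M)$ rate via the minimality of $\beta_{l^\dagger}$. The only cosmetic difference is that you reduce to a standard chi-squared by writing $Y=\sigma Z_{T^\dagger}^{1/2}\xi$, whereas the paper reaches the same point by a stochastic-domination argument (comparing $X$ to $\bar X$ with covariance $\sigma^2(1+C)I$); and you make explicit both the monotonicity $(ze^{1-z})^{n/2}\leq (ze^{1-z})^{|\J^{(l^\dagger)}|/2}$ and the boundary case $l^\dagger=q$ in the rate identification, which the paper leaves implicit.
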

\begin{proof}
    Start by observing that since $v_{T^\dagger}(\eta) = u^{(l^\dagger)} + \zeta^{(l^\dagger)}$,
    \begin{equation}
        \|v_{T^\dagger}(\eta)\| \leq \|u^{(l^\dagger)}\| + \|\zeta^{(l^\dagger)}\| \leq \|u\| + \|\zeta^{(l^\dagger)}\|.
    \end{equation}
    Next, the latter part of the sum on the right-hand side will be bounded.
    \begin{equation}\label{noise bound}
        \|\zeta^{(l^\dagger)}\|^2 \leq \sum_{k=1}^{l^\dagger} (1+C) h^{-2sk} \sum_i [\phi^{(k),\chi}_i, \zeta]^2 \leq (1+C) h^{-2sl^\dagger} \sum_{k=1}^{l^\dagger} \sum_i [\phi_i^{(k),\chi}, \zeta]^2.
    \end{equation}
    Defining the Gaussian vector $X$ such that $X_i = [\phi^{(k),\chi}, \zeta]$ for $i \in \J^{(k)}$ for $k = \{0, ..., l^\dagger\}$, it holds that $\Cov(X) < \sigma^2 (1 + C) I$ by Theorem \ref{phi^chi bounds}.  Defining Gaussian vector $\bar{X}$ such that $\Cov(\bar{X}) = \sigma^2 (1 + C) I$, it holds that $M := \sigma \sqrt{1+C} \Cov(X)^{-1/2} > I$.  Note that since $\P[|MX|^2 \geq |X|^2] = 1$ and $MX$ has an identical distribution to $\bar{X}$, it is true that
    \begin{equation}
        \P[|X|^2 < x] \geq \P[|MX|^2 < x] = \P[|\bar{X}|^2 < x] \, .
    \end{equation}
    Hence, to get a tail bound of \eqref{noise bound}, we apply a Chernoff bound on $\chi^2$ distributions, which states for $z > 1$ the CDF of $Q_k$, a $\chi^2(k)$-distribution, is bounded by
    \begin{equation}
        P(Q_k < zk) \geq 1 - (z e^{1-z})^{k/2}.
    \end{equation}
    This bound can be deduced from Lemma 2.2 of \cite{chi2chernoff}.  Hence, with probability at least $1 - (z e^{1-z})^{|\J^{(l^\dagger)}|/2}$ ($C$ absorbs a constant dependent on $h$, $s$, and $d$)
    \begin{equation}\label{lemma 6.7 result}
        \eqref{noise bound} \leq C z \sigma^2 h^{(-2s-d)l^\dagger} \leq Cz \sigma^2 \bigg(\frac{\sigma}{M}\bigg)^\frac{-4s-2d}{4s+d} = Cz \sigma^\frac{4s}{4s+d} M^\frac{4s+2d}{4s+d}
    \end{equation}

\end{proof}

\begin{Theorem}
Assuming $l^\dagger \neq 0$, the following inequality holds with  probability at least $1 - \varepsilon$,
    \begin{equation}
        \|\eta^{(l^\dagger)}\| \leq \|u\| + C \sqrt{\log\frac{1}{\varepsilon}} \sigma^\frac{2s+d}{4s+d} M^\frac{2s+2d}{4s+d}\,,
    \end{equation}
    where $C$ depends only on $h$, $d$, $\delta$ and $s$.
\end{Theorem}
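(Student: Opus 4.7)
My plan is to derive the theorem from Lemma \ref{smooth recov lemma} by calibrating the free parameter $z$ against the desired confidence level $\varepsilon$. First, I would set $z = z(\varepsilon)$ so that $(ze^{1-z})^{|\J^{(l^\dagger)}|/2} \leq \varepsilon$, using elementary lower bounds on $z - 1 - \log z$ (e.g.\ $z - 1 - \log z \geq (z-1)^2/4$ near $z = 1$, and $z - 1 - \log z \geq z/2$ for $z$ sufficiently large). Inverting gives
\[
\sqrt{z} \;\leq\; C\bigl(1 + \sqrt{\log(1/\varepsilon)/|\J^{(l^\dagger)}|}\bigr),
\]
and feeding this into Lemma \ref{smooth recov lemma} yields, with probability at least $1-\varepsilon$,
\[
\|\eta^{(l^\dagger)}\| \;=\; \|v_{T^\dagger}(\eta)\| \;\leq\; \|u\| + C\bigl(1 + \sqrt{\log(1/\varepsilon)/|\J^{(l^\dagger)}|}\bigr)\sigma^{2s/(4s+d)} M^{(2s+d)/(4s+d)}.
\]

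Next I would compute $|\J^{(l^\dagger)}|$ explicitly. From the pre-Haar construction in Example \ref{exprehaar}, $|\J^{(k)}| \asymp h^{-dk}$. The assumption $l^\dagger \neq 0$ places us in one of the two nontrivial branches of \eqref{beta def}; balancing the two contributions $\sigma^2 h^{-(2s+d)l}$ and $h^{2s(l+1)}M^2$ identified by the minimization \eqref{l^dag def} forces $h^{l^\dagger} \asymp (\sigma/M)^{2/(4s+d)}$, whence $|\J^{(l^\dagger)}| \asymp (M/\sigma)^{2d/(4s+d)}$. Substituting this into the displayed inequality and combining powers of $\sigma$ and $M$ produces the claimed $\sqrt{\log(1/\varepsilon)}\,\sigma^{(2s+d)/(4s+d)} M^{(2s+2d)/(4s+d)}$ factor.

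The main obstacle is the careful inversion of the Chernoff-type bound so that $\sqrt{z}$ picks up only a $\sqrt{\log(1/\varepsilon)}$ dependence (rather than $\log(1/\varepsilon)$) uniformly across the two regimes $\log(1/\varepsilon) \lesssim |\J^{(l^\dagger)}|$ (where $z$ is close to $1$ and $\sqrt{z}$ is $O(1)$) and $\log(1/\varepsilon) \gtrsim |\J^{(l^\dagger)}|$ (where the linear piece of $z - 1 - \log z$ dominates). The additive $O(1)$ contribution from the first regime must be absorbed into the leading $\sqrt{\log(1/\varepsilon)}$ coefficient; this is legitimate under the mild convention that $\varepsilon$ is bounded away from $1$ (say $\varepsilon \leq 1/e$), so that $\sqrt{\log(1/\varepsilon)} \geq 1$ and the $O(1)$ term can be swallowed into the constant $C$.
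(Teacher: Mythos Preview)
Your proposal is correct and follows essentially the same route as the paper: invoke Lemma~\ref{smooth recov lemma}, invert the Chernoff tail $(ze^{1-z})^{|\J^{(l^\dagger)}|/2}=\varepsilon$ to control $z$ in terms of $\varepsilon$ and $|\J^{(l^\dagger)}|$, and then substitute $|\J^{(l^\dagger)}|\asymp h^{-dl^\dagger}\asymp (M/\sigma)^{2d/(4s+d)}$ obtained from the balance condition in~\eqref{l^dag def}--\eqref{beta def}. The only difference is that the paper inverts via the single crude bound $ze^{1-z}\le 2e^{-z/2}$ for $z>1$, whereas your two-regime analysis (and explicit handling of the additive $O(1)$ via the convention $\varepsilon\le 1/e$) is slightly more careful.
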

\begin{proof}
Given lemma \ref{smooth recov lemma}  we start by defining
\begin{equation}
    \varepsilon = \big(z e^{1-z}\big)^{|\J^{(l^\dagger)}|/2}.
\end{equation}
We now get an upper bound for $z$ in terms of $\varepsilon$.  Since the only significant values of $\varepsilon$ are $\varepsilon \in (0, 1)$, it must be true that $z > 1$.  This implies that $z e^{1-z} \leq 2 e^{-z/2}$.   Hence,
\begin{equation}
    \frac{4}{\log 2} \log\frac{1}{\varepsilon} \geq |\J^{(l^\dagger)}| z \geq C^{-1} h^{-dl^\dagger}z
\end{equation}
Hence, for $C$ dependent on $h$, $d$, $\delta$ and $s$,
\begin{equation}
    C \log \frac{1}{\varepsilon} \geq \bigg(\frac{\sigma}{M}\bigg)^\frac{-2d}{4s + d} z
\end{equation}
This yields an upper bound on $z$, hence we apply Lemma \ref{smooth recov lemma} and obtain the result:
\begin{equation}
    \eqref{lemma 6.7 result} \leq C \log \frac{1}{\epsilon} \sigma^\frac{4s+2d}{4s+d} M^\frac{4s+4d}{4s+d}.
\end{equation}
\end{proof}

\section{De-noising graph Laplacians} \label{prob graph}
Since gamblets can also be constructed for graph Laplacians \cite{OwhadiScovel:2017,OwhScobook2018} the proposed method can also be used for de-noising
such operators. This section will provide a succinct description of  this extension with numerical illustrations.
\subsection{The problem}
 Let $G=(V,E)$ be a simple graph (with vertex set $V$ and edge set $E$) and let $L$ be its graph Laplacian, which is defined as follows:

\begin{equation}
    L_{i, j} =
    \begin{cases}
        \text{deg}(v_i) & \text{if } i = j \\
        -1 & \text{if } i \neq j \text{ and } v_i \text{ is adjacent to } v_j\\
        0 & \text{otherwise}
    \end{cases}
\end{equation}
Fix $i_0\in V$ and let
 $\V:=\{u\in \R^V \mid u_{i_0}=0\}$. Note that $L$ is a symmetric positive linear bijection from $\V$ to $\V$ with respect to the inner product $\< u,v \>_L = u^T L v$. Consider the following problem.

\begin{Problem}\label{pb3}
 Recover the solution $u\in \V$ of
\begin{equation}\label{eqn:scalar}
L u = f\,
\end{equation}
as accurately as possible in the energy norm, defined as $\|u\|^2_L = \<u,u\>_L$, given the information that $f\in \V$ with $|f|_2\leq M$ and the observation
$\eta=u+\zeta$ where $\zeta$ is a centered Gaussian vector on $\V$ with covariance matrix $\sigma^2$.
\end{Problem}

\subsection{Near minimax recovery}

Similarly to section \ref{Near Minimax Sec}, with $M > 0$, define
\begin{equation}
    V_M = \{ u \in \V \mid |L u|_2 \leq M \} \, .
\end{equation}
With gamblets as defined in section \ref{gamblets R^N}, \cite{OwhadiScovel:2017,OwhScobook2018} provide sufficient conditions under which
the matrices $B^{(k)}$ and $Z$ are uniformly well conditioned  (as in Theorem \ref{themoptimaldecomposition}), in particular under these conditions there exists  $H \in (0,1)$ such that
\begin{equation}
    C^{-1}H^{-2(k-1)}J^{(k)}\leq B^{(k)} \leq CH^{-2k}J^{(k)}\text{ and }\Cond(B^{(k)})\leq CH^{-2}\,.
\end{equation}
In this brief outlook we will simply numerically estimate $H$ and define $d$ to be such that $H^{-d} = \Theta(|\J^{(k)}|)$. Assuming $\sigma > 0$, set
\begin{equation}\label{l^dag defbis}
    l^\dagger = \argmin_{l\in \{0,\dots,q\}} \beta_l,
\end{equation}
for
\begin{equation} \label{beta defbis}
    \beta_l = \begin{cases}
    H^{2} M^2 & \text{if } l = 0\\
    H^{-(2+d)l} \sigma^2 + H^{2(l+1)}M^2 & \text{if } 1 \leq l \leq q - 1\\
    H^{-(2+d)q} \sigma^2 & \text{if } l = q \, .
    \end{cases}
\end{equation}
Finally, we hard thresholding \eqref{eqhardtreshold} and
\begin{equation}\label{eqkedlhdkjehd}
v^\dagger(\eta) := \eta^{(l^\dagger)}\,,
\end{equation}
to recover $u$ from $\eta$.

\subsection{Numerical illustration}\label{sec results graph}
For our numerical illustration  the edges and vertices of the graph are the streets and intersections of Pasadena, CA (obtained using the python OSMNX package \cite{OSMNX}). The latitude and longitude of each vertex is known through the package, and $f$ is defined as $f(v_i) = \cos(3x(v_i) + y(v_i)) + \sin(3y(v_i)) + \sin(7x(v_i) - 5y(v_i))$, where $x(v_i)$ and $y(v_i)$ are the latitudes and longitudes of vertex $v_i$ normalized to $[0,1]$.  The noise, $\zeta$, is a centered Gaussian vector on $\V$ with $\sigma = 400$.

The measurement functions used for the gamblet decomposition are defined as
\begin{equation}
    \phi^{(k)}_{i, j}(u) = \frac{1}{\sqrt{|S^{(k)}_{i,j}|}} \sum_{l \in S^{(k)}_{i,j}} u_{v_l}
\end{equation}
where $S^{(k)}_{i,j}$ contain all vertices with relative latitudes in $[\frac{i - 1}{2^k}, \frac{i}{2^k}]$ and relative longitudes in $[\frac{j - 1}{2^k}, \frac{j}{2^k}]$.
The $W^{(k)}$ matrix are as in Subsection \ref{pre-haar}.  Further, in this graph, we estimate $H \approx 0.5996$ and $d \approx 2.327$.

The following figures show $f$, $u$, $\eta$, $v^\dagger(\eta)$ (using \eqref{eqkedlhdkjehd}), and the recovery error $v^\dagger(\eta) - u$.
\begin{figure}[H]
    \centering
    \includegraphics[width=0.9\textwidth]{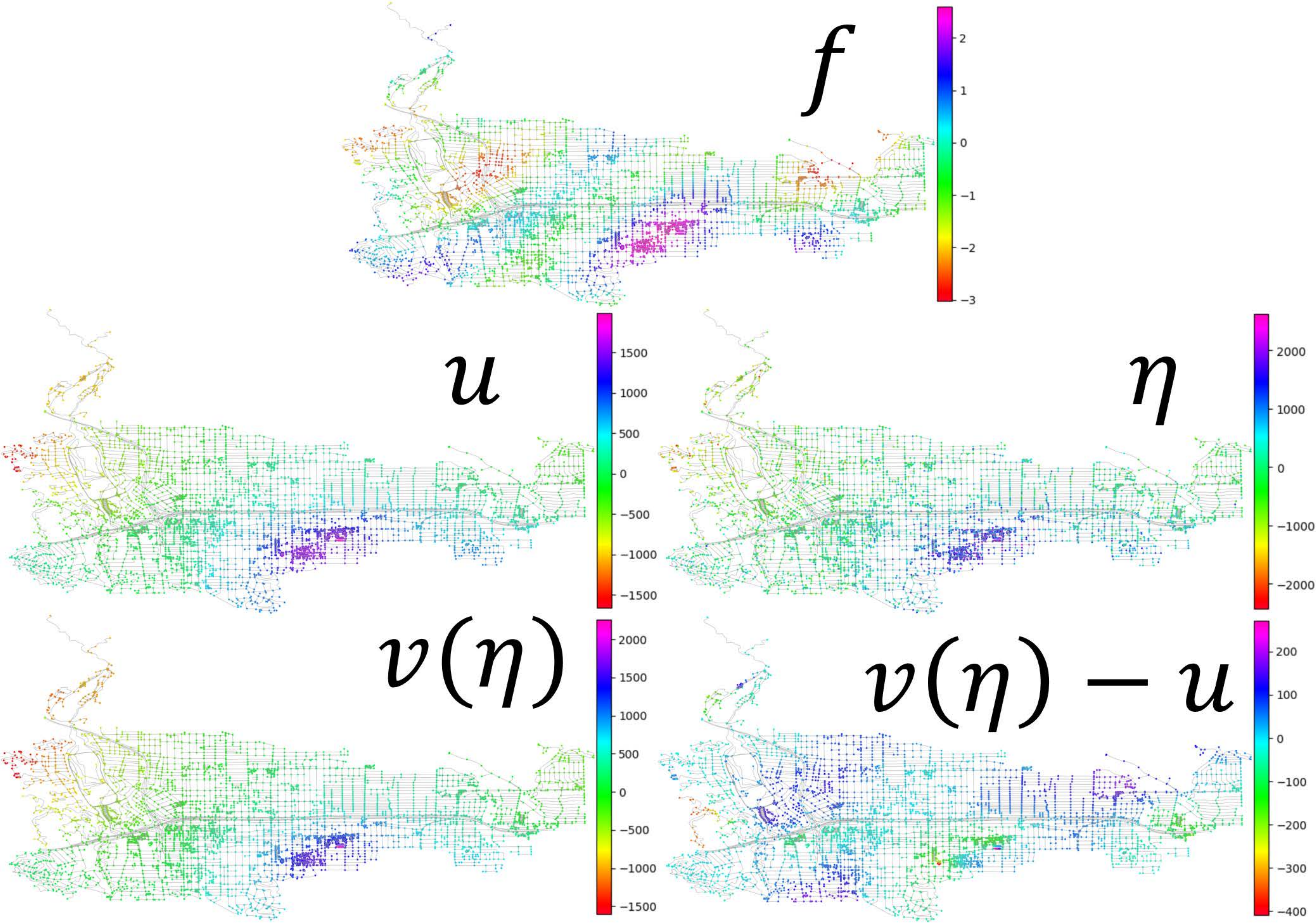}
    \caption{The plot of $f$, $u$, $\eta$, $v^\dagger(\eta)$, and $v^\dagger(\eta)-u$ on the Pasadena graph.}
    \label{f Pasadena}
\end{figure}
Average errors were computed using $100$ independent noise samples.  The de-noising errors of the hard gamblet thresholding with fixed thresholds ($t^{(k)} = t$ for all $k$) is compared to the near minimax recovery analog, $v^\dagger$, is given in the table below.
\begin{center}
\begin{tabular}{ |p{5.25cm}||p{1.768cm}|p{1.768cm}||p{1.768cm}|p{1.768cm}|  }
 \hline
 \multicolumn{5}{|c|}{Comparison of de-noising algorithms performance} \\
 \hline
 Algorithm & $\L$ Error AVG & $\L$ Error STDEV & $L^2$ Error AVG & $L^2$ Error STDEV \\
 \hline
 Estimator \eqref{eqkedlhdkjehd} & $578.3$ & $11.6$ & \num{6216} & $234$\\
 Hard variable threshold & $541.6$ & $40.7$ & \num{5644} & $446$\\

 \hline
\end{tabular}
\end{center}
For reference, the average and standard deviation of the $L$-energy norm of $\zeta$ used in this trial were $49265$ and $576$ respectively.

\paragraph{Acknowledgments.}
The authors gratefully acknowledges this work supported by  the Air Force Office of Scientific Research and the DARPA EQUiPS Program under award   number FA9550-16-1-0054 (Computational Information Games).

\bibliographystyle{plain}
\bibliography{Gambletsfordenoising7}

\end{document}